\newtheorem{theorem}{Theorem}[section]
\newtheorem{lemma}[theorem]{Lemma}
\newtheorem{prop}[theorem]{Proposition}
\newtheorem{assumption}[theorem]{Assumption}
\newtheorem{corro}[theorem]{Corollary}
\theoremstyle{definition}
\newtheorem{definition}[theorem]{Definition}
\theoremstyle{remark}
\newtheorem{remark}[theorem]{Remark}
\numberwithin{equation}{section}
\DeclareMathAlphabet{\mathsl}{OT1}{cmss}{m}{sl}
\SetMathAlphabet{\mathsl}{bold}{OT1}{cmss}{bx}{sl}
\newcommand{\al}{\ensuremath{\alpha}}
\newcommand{\be}{\ensuremath{\beta}}
\newcommand{\ga}{\ensuremath{\gamma}}
\newcommand{\de}{\ensuremath{\delta}}
\renewcommand{\th}{\ensuremath{\theta}}
\newcommand{\ka}{\ensuremath{\kappa}}
\newcommand{\si}{\ensuremath{\sigma}}
\newcommand{\om}{\ensuremath{\omega}}
\newcommand{\ve}{\ensuremath{\varepsilon}}
\newcommand{\Si}{\ensuremath{\Sigma}}
\newcommand{\Om}{\ensuremath{\Omega}}
\newcommand{\cB}{\ensuremath{\mathcal B}}
\newcommand{\cE}{\ensuremath{\mathcal E}}
\newcommand{\cF}{\ensuremath{\mathcal F}}
\newcommand{\cL}{\ensuremath{\mathcal L}}
\newcommand{\cP}{\ensuremath{\mathcal P}}
\newcommand{\bbN}{\ensuremath{\mathbb N}} 
\newcommand{\bbP}{\ensuremath{\mathbb P}} 
\newcommand{\bbR}{\ensuremath{\mathbb R}}
\newcommand{\bbZ}{\ensuremath{\mathbb Z}} 
\newcommand{\md}{\ensuremath{\mathrm{d}}}
\newcommand{\mD}{\ensuremath{\mathrm{D}}}
\newcommand{\scpr}[3]{%
  \ensuremath{%
    \left\langle
      #1, #2
    \right\rangle_{\raisebox{-0ex}{$\scriptstyle \ell^{\raisebox{.1ex}{$\scriptscriptstyle 2$}} (#3)$}}
  }
}
\newcommand{\norm}[3]{%
   \ensuremath{%
     \mathchoice{\big\lVert #1 \big\rVert}
     {\lVert #1 \rVert}
     {\lVert #1 \rVert}
     {\lVert #1 \rVert}_{\raisebox{-.0ex}{$\scriptstyle \ell^{\raisebox{.2ex}{$\scriptscriptstyle #2$}} (#3)$}}
   }
}
\newcommand{\Norm}[2]{%
  \ensuremath{%
    \mathchoice{\big\lVert #1 \big\rVert}
     {\lVert #1 \rVert}
     {\lVert #1 \rVert}
     {\lVert #1 \rVert}_{\raisebox{-.0ex}{$\scriptstyle #2$}}
  }
}
\DeclareMathOperator{\mean}{\mathbb{E}}
\DeclareMathOperator{\Mean}{\mathrm{E}}
\DeclareMathOperator{\prob}{\mathbb{P}}
\DeclareMathOperator{\Prob}{\mathrm{P}}
\DeclareMathOperator{\supp}{\mathrm{supp}}
\DeclareMathOperator{\sign}{\mathrm{sign}}
\newcommand{\ldef}{\ensuremath{\mathrel{\mathop:}=}}
\newcommand{\rdef}{\ensuremath{=\mathrel{\mathop:}}}
\def\indicator{{\mathchoice {1\mskip-4mu\mathrm l}%
{1\mskip-4mu\mathrm l}{1\mskip-4.5mu\mathrm l}%
{1\mskip-5mu\mathrm l}}}
\begin{document}

\title[Quenched CLT for RCM with ergodic conductances]{Invariance principle for the random conductance model in a degenerate ergodic environment}


\author{Sebastian Andres}
\address{Rheinische Friedrich-Wilhelms Universit\"at Bonn}
\curraddr{Endenicher Allee 60, 53115 Bonn}
\email{andres@iam.uni-bonn.de}
\thanks{}

\author{Jean-Dominique Deuschel}
\address{Technische Universit\"at Berlin}
\curraddr{Strasse des 17. Juni 136, 10623 Berlin}
\email{deuschel@math.tu-berlin.de}
\thanks{}

\author{Martin Slowik}
\address{Technische Universit\"at Berlin}
\curraddr{Strasse des 17. Juni 136, 10623 Berlin}
\email{slowik@math.tu-berlin.de}
\thanks{}

\subjclass[2000]{ 60K37, 60F17, 82C41}

\keywords{Random conductance model,  invariance principle, corrector, Moser iteration, ergodic}

\date{\today}

\dedicatory{}

\begin{abstract}
  We study a continuous time random walk, $X$, on $\bbZ^d$ in an environment of random conductances taking values in $(0, \infty)$.  We assume that the law of the conductances is ergodic with respect to space shifts.  We prove a quenched invariance principle for $X$ under some moment conditions on the environment.  The key result on the sublinearity of the corrector is obtained by Moser's iteration scheme. 
\end{abstract}

\maketitle

\tableofcontents

\section{Introduction}

\subsection{The model}
Consider the $d$-dimensional Euclidean lattice, $(V_d, E_d)$, for $d \geq 2$.  The vertex set, $V_d$, of this graph equals $\bbZ^d$ and the edge set, $E_d$, is given by the set of all nonoriented nearest neighbor bonds, i.e.\ $E_d \ldef \{ \{x,y\}: x,y \in \bbZ^d, |x-y|=1\}$. 

Let $(\Om, \cF) = \big((0,\infty)^{E_d}, \cB((0,\infty))^{\otimes\, E_d}\big)$ be a measurable space.  Assume further that the graph $(V_d, E_d)$ is endowed with positive weights, that is, we consider a family $\om = \{\om(e) : e \in E_d\} \in \Om$.  We refer to $\om(e)$ as the \emph{conductance} on an edge, $e$.  We will henceforth denote by $\prob$ a probability measure on $(\Om, \cF)$, and we write $\mean$ to denote the expectation with respect to $\prob$.  To lighten notation, for any $x,y \in \bbZ^d$, we set
\begin{align*}
  \om_{xy} \;=\; \om_{yx} \;\ldef\; \om(\{x,y\}),
  \quad \forall\, \{x,y\} \in E_d,
  \mspace{28mu}
  \om_{xy} \;\ldef\; 0, \quad \forall\, \{x,y\} \not\in E_d.
\end{align*}
A \emph{space shift} by $z \in \bbZ^d$ is a map $\tau_z\!: \Om \to \Om$
\begin{align}\label{eq:def:space_shift}
  (\tau_z \om)_{xy} \;\ldef\; \om_{x+z,y+z},
  \qquad \forall\, \{x,y\} \in E_d.
\end{align}
The set $\big\{\tau_x : x \in \bbZ^d\big\}$ together with the operation $\tau_{x} \circ \tau_{y} \ldef \tau_{x+y}$ defines the \emph{group of space shifts}.

We will study the nearest neighbor \emph{random conductance model}.  For any fixed realization $\om$, it is a \emph{reversible} continuous time Markov chain, $X = (X_t\!: t \geq 0 )$, on $\bbZ^d$ with generator $\cL^{\om}$ acting on bounded functions $f\!: \bbZ^d \to \bbR$ as
\begin{align*}
  \big(\cL^{\om} f)(x)
  \;=\;
  \sum_{y \in \bbZ^d} \om_{xy}\, \big(f(y) - f(x)\big).
\end{align*}
We denote by $\Prob_x^\om$ the law of the process starting at the vertex $x \in \bbZ^d$.  The corresponding expectation will be denoted by $\Mean_x^\om$.  Setting $\mu^{\om}(x) \ldef \sum_{y \in \bbZ^d} \om_{xy}$ and $p^{\om}(x,y) \ldef \om_{xy} / \mu^{\om}(x)$, this random walk waits at $x$ an exponential time with mean $1/ \mu^{\om}(x)$ and chooses its next position $y$ with probability $p^{\om}(x,y)$.  Since the law of the waiting times depends on the location, $X$ is also called the \emph{variable speed random walk} (VSRW).

We denote by $p^{\om}(t,x,y) \ldef \Prob_{x}^{\om}[X_t = y]$ for $x, y \in \bbZ^d$ and $t \geq 0$ the transition densities with respect to the counting measure.  As a consequence of \eqref{eq:def:space_shift} we have
\begin{align*}
  p^{\tau_z \om}(t, x, y)
  \;=\;
  p^{\om}(t, x+z, y+z).
\end{align*}
\begin{assumption} \label{ass:environment}
  Assume that $\prob$ satisfies the following conditions:
  \begin{enumerate}[ (i) ]
    \item $\prob\!\big[ 0 < \om(e) < \infty\big] = 1$ and $\mean\!\big[\om(e)\big] < \infty\, $ for all $e \in E_d$.
    \item $\prob$ is ergodic with respect to translations of $\bbZ^d$, that is, $\prob \circ\, \tau_x^{-1} \!= \prob\,$ for all $x \in \bbZ^d$ and $\prob[A] \in \{0,1\}\,$ for any $A \in \cF$ such that $\tau_x(A) = A\,$ for all $x \in \bbZ^d$.
  \end{enumerate}
\end{assumption}

\subsection{Results}
We are interested in the $\prob$-almost sure or quenched long range behavior, in particular in obtaining a quenched functional central limit theorem for the process $X$ in the sense of the following definition.
\begin{definition} \label{def:QFCLT}
  Set $X_t^{(n)} \ldef \frac{1}{n} X_{n^2 t}$, $t \geq 0$. We say that the \emph{Quenched Functional CLT} (QFCLT) or \emph{quenched invariance principle} holds for $X$ if for $\prob$-a.e.\ $\om$ under $\Prob_{\!0}^\om$, $X^{(n)}$ converges in law to a Brownian motion on $\bbR^d$ with covariance matrix $\Si^2 = \Si \cdot \Si^T$.  That is, for every $T > 0$ and every bounded continuous function $F$ on the Skorohod space $D([0,T], \bbR^d)$, setting $\psi_n = \Mean_0^{\om}[F(X^{(n)})]$ and $\psi_\infty = \Mean_0^{\mathrm{BM}}[F(\Si \cdot W)]$ with $(W, \Prob_{\!0}^{\mathrm{BM}})$ being a Brownian motion started at $0$, we have that $\psi_n \rightarrow \psi_\infty$ $\prob$-a.s.
\end{definition}
As our main result we establish a QFCLT for $X$ under some additional moment conditions on the conductances.
\begin{theorem}\label{thm:main}
  Suppose that $d \geq 2$ and Assumption \ref{ass:environment} holds.  Let $p,q \in (1, \infty]$ be such that $1/p + 1/q < 2/d$ and assume that
  \begin{align}\label{eq:moment_condition}
    \mean\!\big[(\om(e))^p\big] < \infty
    \quad \text{and} \quad
    \mean\!\big[(1/\om(e))^q\big] < \infty
  \end{align}
  for any $e \in E_d$.  Then, the QFCLT holds for $X$  with a deterministic nondegenerate covariance matrix $\Si^2$.
\end{theorem}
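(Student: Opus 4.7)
The plan is to apply the Kipnis--Varadhan corrector method, supplemented by the Moser iteration flagged in the abstract to handle the degeneracy of the conductances. As a preliminary step I would set up the \emph{environment seen from the particle} $\bar\om_t \ldef \tau_{X_t}\om$, a Markov process on $(\Om,\cF)$ with invariant probability measure $\bbQ$ of density $\mu^\om(0)/\mean[\mu^\om(0)]$ with respect to $\prob$; the bound $\mean[\om(e)]<\infty$ from Assumption~\ref{ass:environment} ensures integrability, and ergodicity of $\bar\om$ under $\bbQ$ follows from Assumption~\ref{ass:environment}\textit{(ii)}.

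I would then construct the corrector $\chi\colon \Om\times\bbZ^d\to\bbR^d$ as the $L^2(\bbQ)$-orthogonal projection of the position cocycle $(\om,x)\mapsto x$ onto the subspace of potential (gradient) fields; this gives a shift-covariant cocycle
\begin{align*}
\chi(\om,x+y) \;=\; \chi(\om,x) + \chi(\tau_x\om,y),
\end{align*}
for which $M_t \ldef X_t - \chi(\om,X_t)$ is a $\Prob_{\!0}^\om$-martingale for $\prob$-a.e.\ $\om$. Applying Helland's martingale functional CLT to $M_t^{(n)} \ldef M_{n^2 t}/n$, whose predictable quadratic variation is an additive functional of $\bar\om$ converging by the ergodic theorem to the deterministic limit $t\,\Si^2$, then yields a QFCLT for $M^{(n)}$ under $\Prob_{\!0}^\om$.

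The hard part will be transferring this to $X^{(n)}$, which reduces to proving the \emph{sublinearity of the corrector},
\begin{align*}
\frac{1}{n}\,\max_{|x|\le n} |\chi(\om,x)| \;\longrightarrow\; 0
\qquad \prob\text{-a.s.}
\end{align*}
The averaged statement $n^{-d-1}\sum_{|x|\le n}|\chi(\om,x)|\to 0$ is a Birkhoff-type consequence of the cocycle structure together with the integrability of $\om$. To pass from averaged to uniform control I would run Moser's iteration for the operator $\cL^\om$: each coordinate of $\chi$ satisfies a Poisson equation $\cL^\om \chi^i(x) = \sum_y \om_{xy}(y_i-x_i)$ with a local bounded source, and the aim is a maximal inequality of the form
\begin{align*}
\max_{x\in B(n)} |u(x)|
\;\le\;
C(\om,n)\,\Bigl(n^{-d}\!\!\sum_{x\in B(2n)}\!|u(x)|^{r}\Bigr)^{\!1/r}
\end{align*}
for subsolutions $u$ on a ball $B(n)\subset\bbZ^d$, obtained by iterating a weighted discrete Sobolev inequality against dyadic cutoff test functions. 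The constant $C(\om,n)$ depends on empirical averages of $\om^p$ and $\om^{-q}$ over $B(2n)$, and the iteration closes precisely when the Sobolev dual exponents match the integrability of the weight, i.e.\ when $1/p+1/q<2/d$; the spatial ergodic theorem then controls $C(\om,n)$ uniformly in $n$. A dyadic scales argument combining this maximal inequality with the $L^1$-sublinearity delivers the pointwise bound.

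Finally, non-degeneracy of $\Si^2$ follows from a Dirichlet form comparison that uses $\mean[1/\om(e)]<\infty$ to exclude zero eigenvalues, and passing from the QFCLT for $M^{(n)}$ to that for $X^{(n)}$ is then standard: sublinearity of $\chi$ together with the tightness of $X^{(n)}$ (inherited from $M^{(n)}$) implies $n^{-1}\sup_{t\le T}|\chi(\om,X_{n^2 t})|\to 0$ in $\Prob_{\!0}^\om$-probability, and Slutsky's theorem concludes the proof.
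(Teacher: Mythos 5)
Your proposal follows essentially the same route as the paper: construct the corrector by projecting the position cocycle onto potential fields, apply Helland's martingale FCLT to $M^{(n)}$, prove averaged sublinearity of $\chi$ and upgrade it to uniform sublinearity on balls by a Moser iteration whose constants involve empirical averages of $\om^p$ and $\om^{-q}$ and close exactly under $1/p+1/q<2/d$, then transfer the invariance principle from $M^{(n)}$ to $X^{(n)}$ via an exit-time/Doob argument. Two points should be corrected or sharpened. First, for the variable speed walk $X$ treated here the environment process $\tau_{X_t}\om$ is stationary, reversible and ergodic with respect to $\prob$ itself (the VSRW is reversible for the counting measure, see Lemma~\ref{lem:eta}); the tilted measure $\bbQ$ with density $\mu^{\om}(0)/\mean[\mu^{\om}(0)]$ is the invariant measure for the \emph{constant} speed walk, and running the ergodic theorem for the quadratic variation under $\bbQ$ would misnormalize the limit and hence $\Si^2$ (cf.\ Remark~\ref{rem:csrw}); the corrector itself is unaffected, since the relevant weighted cocycle space is the same up to the constant $\mean[\mu^{\om}(0)]$. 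Second, the averaged ($\ell^1$) sublinearity is not merely a Birkhoff consequence of the integrability of $\om$: it requires $\mean[1/\om(e)]<\infty$, which gives $\chi(\cdot,x)\in L^1(\prob)$ and, via Cauchy--Schwarz, control of the nearest-neighbour increments of $\chi$, and it is combined with an $\ell^1$-Poincar\'e inequality on subcubes as in Lemma~\ref{lem:cubes} and Proposition~\ref{prop:conv_l1_corr}. With these repairs your outline matches the paper's argument.
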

\begin{remark}
If the law $\prob$ of the conductances is also invariant under symmetries of $\bbZ^d$, then the limiting covariance matrix $\Si^2$ must be invariant under symmetries as well, so $\Si^2$ is of the form $\Si^2 = \si^2 I$ for some $\sigma>0$. (Here  $I$ denotes the identity matrix.)
\end{remark}
\begin{remark} \label{rem:csrw}
  Given a speed measure $\pi^\om\!: \bbZ^d \to (0,\infty)$ satisfying $\pi^\om(x) = \pi^{\tau_x \om}(0)$ and $0 < \mean[\pi^{\om}(0)] < \infty$, one can also consider the process, $Y = (Y_t\!: t \geq 0)$ on $\bbZ^d$ that is defined by a time change of $X$, that is, $Y_t \ldef X_{a_t}$ for $t \geq 0$, where $a_t \ldef \inf\{s \geq 0 : A_s > t\}$ denotes the right continuous inverse of the functional
  \begin{align*}
    A_t \;=\; \int_0^t \pi^\om(X_s) \, \md s, \qquad t \geq 0.
  \end{align*}
  Its generator is given by
  \begin{align*}
    \cL_Y^{\om} f(x)
    \;=\;
    \sum_{y \in \bbZ^d} \frac{\om_{xy}}{\pi^{\om}(x)} \big(f(y)-f(x)\big).
  \end{align*}
  Suppose that $X$ satisfies an invariance principle, that is, the rescaled process converges to a Brownian motion on $\bbR^d$ with covariance matrix $\Si^2_X$. As it was shown in \cite[Section 6.2]{ABDH12}, the process $Y$ satisfies an invariance principle as well.  In this case, the covariance matrix of its limiting Brownian motion is given by $\Si_Y^2 = \mean[\pi^{\om}(0)]^{-1} \Si_X^2$.

  A natural choice for the speed measure would be $\pi^{\om} = \mu^{\om}$. In such a case, $Y$ is called \emph{constant speed random walk (CSRW)}.  In contrast to the VSRW $X$ whose waiting time at any site $x \in \bbZ^d$ depends on $x$, the process $Y$ waits at each site an exponential time with mean $1$.
\end{remark}
\begin{remark}
  Note that Assumption~\ref{ass:environment} (i) ensures the stochastic completeness of the process $X$, that is, it does not explode in finite time almost surely.
\end{remark}
Invariance principles for the random conductance model have been studied by a number of different authors under various restrictions on the law of the environment.  A weak FCLT, that is, where the convergence of $\psi_n$ to $\psi_\infty$ in Definition~\ref{def:QFCLT} only takes place in $\bbP$-probability, has been proved already in \cite{dMFGW89} (cf.\ also \cite{KV86}) for general ergodic environments under the first moment condition $\mean[\om(e)] < \infty$.  However, it took quite some time to extend this result to $\prob$-almost sure convergence in the special case of a uniformly elliptic environment with  conductances which are bounded both from above and below, that is, $\prob[1/c \leq \om(e) < c ] = 1$ for some $c > 0$; see \cite{SS04}.

In the very special case of i.i.d.\ conductances, that is when $\prob$ is a product measure, it turns out that no moment conditions are required for the QFCLT, cf. \cite{ABDH12}, see also \cite{BB07,MP07} for the corresponding supercritical percolation model and \cite{BP07,Ma08,BD10} for similar results.  In the setting of balanced random walks in random environment, a similar type of result holds, namely the QFCLT is true for general ergodic environments under some moment conditions \cite{GZ12}, whereas in the i.i.d.\ case no ellipticity is needed \cite{BD13}. Finally, for a recent result on random walks under random conductances on domains with boundary, we refer to \cite{CCK13}.

In the case of a general ergodic environment, it is clear that some moment conditions are needed, in particular, in \cite{BBT13, BBT13a} Barlow, Burdzy and Timar give an example on $\bbZ^2$, for which the weak FCLT holds but the QFCLT fails.  In their model, \eqref{eq:moment_condition} is assumed for $p, q \in (0,1)$ but $\mean[\om(e)] = \infty$ and $\mean[1/\om(e)] = \infty$.  Recently, in \cite{Bi11} Biskup proved the  QFCLT for the special  case $d = 2$ under the moment condition \eqref{eq:moment_condition} with $p = q = 1$.  We believe that this is a natural optimal condition for general ergodic environments. Nevertheless, the proof relies on arguments -- inspired by \cite{BB07} for the percolation case -- which only work in the very special two-dimensional case (see below) and do not seem to be extendable to higher dimensions.  Recently, in \cite{PRS13}, a QFCLT has been proven for simple random walks on percolation models on $\bbZ^d$ with long-range correlations such a!
 s random interlacements or the level set of the Gaussian free field.

Finally, let us remark that under the moment conditions \eqref{eq:moment_condition} the result in \cite{BS12arx} guarantees that the random walk $X$ has vanishing speed.

\subsection{The Method}
The main ingredient to prove a QFCLT is to introduce harmonic coordinates, that is one constructs a \emph{corrector} $\chi\!:\Om \times \bbZ^d \to \bbR^d$ such that
\begin{align*}
  \Phi(\om, x) \;=\; x - \chi(\om, x)
\end{align*}
is an $\cL^\om$-harmonic function, that is, for $\prob$-a.e.\ $\om$ and every $x \in \bbZ^d$,
\begin{align*}
  \cL^\om \Phi(\om, x)
  \;=\;
  \sum_{y} \om_{xy} \big( \Phi(\om, y) - \Phi(\om, x) \big)
  \;=\;
  0.
\end{align*}
This can be rephrased by saying that $\chi$ is a solution of the Poisson equation
\begin{align} \label{eq:pois_chi}
  \cL^{\om} u \;=\; \nabla^* V^{\om},
\end{align}
where $V^{\om}\!:E_d \to \bbR^d$ is the local drift given by $V^{\om}(x,y) \ldef \om_{xy} \, (y-x)$ and $\nabla^*$ denotes the divergence operator associated with the discrete gradient.  

Moreover, the corrector $\chi$ needs to be shift invariant in the sense that it satisfies $\bbP$-a.s.\ the following \emph{cocycle property}:
\begin{align*}
  \chi(\om, x + y) - \chi(\om, x)
  \;=\;
  \chi(\tau_x\om, y),
  \qquad x,y \in \bbZ^d.
\end{align*}
The construction of the corrector follows from a simple projection argument of the trivial cocycle $\Pi(\om, x) = x$ under the first moment condition $\mean[\om(e)]<\infty$.  The $\cL^\om$-harmonicity of $\Phi$ implies that
\begin{align*}
  M_t \;=\; X_t - \chi(\om, X_t)
\end{align*}
is a martingale under $\Prob_{\!0}^{\om}$ for $\prob$-a.e.\ $\om$, and a QFCLT for the martingale part $M$ can be easily shown by standard arguments.  We thus get a QFCLT for $X$ once we verify that $\prob$-almost surely the corrector is sublinear:
\begin{align} \label{eq:sublin_intro}
  \lim_{n \to \infty}  \max_{|x| \leq n} \frac{ \left| \chi(\om,x) \right|}{n}
  \;=\;
  0.
\end{align}
This control on the corrector implies that for any $T>0$ and $\prob$-a.e\ $\om$
\begin{align*}
  \sup_{0 \,\leq\, t \,\leq\, T}\,
    \frac{1}{n}\, \Big| \chi\big(\om, n\, X_{t}^{(n)}\big) \Big|
    \;\underset{n \to \infty}{\longrightarrow}\;
    0 
    \quad \text{ in $\Prob_{\!0}^\om$-probability}
\end{align*}
(see Proposition~\ref{prop:contr_corr} below).  Combined with the QFCLT for the martingale part this gives Theorem~\ref{thm:main}.

The main challenge in the proof of the QFCLT is to prove \eqref{eq:sublin_intro}.  Using the cocycle property  and ergodicity of the environment, it is easy to verify that the corrector is sublinear along each line parallel to the axis:
\begin{align*}
  \lim_{n\to\infty} \frac{1}{n} \chi(\om, n\, e_i)
  \;=\;
  0,
  \qquad i = 1, \ldots, d
  \qquad \prob\text{-a.s.}
\end{align*}
In dimension $d = 2$, this and the fact that $\chi$ solves the Poisson equation \eqref{eq:pois_chi} suffices for \eqref{eq:sublin_intro} to hold, cf. \cite{BB07} and \cite{Bi11}.  In higher dimensions $d \geq 3$, heat kernel estimates for the transition density, cf.\ \cite{De99, Ba04, MR04}, have been used so far in the proofs of the quenched functional CLT, cf.\ \cite{ABDH12, BD10, BB07, BP07, Ma08, MP07, SS04}.  This method is very performing, provided one has a good control on the geometry of the "bad" configurations which are the connected components of very low or very high conductances.  This is the case in the i.i.d.\ setting and would probably work under good mixing properties, but this is not likely to be  the case in a general ergodic environment.

An alternative proof would be to get $L^p(\prob)$ estimates of the corrector for $p > d$  and use ergodic theory for cocycle established in \cite{BD91}.  However, so far these estimates have only been derived for nicely mixing elliptic environments \cite{GO11}.

Motivated by the method of \cite{FK97, FK99}, where diffusions in divergence form in a random environment are considered, we present in this paper a control of the corrector using the Moser iteration.  Moser's iteration is based on two main ideas: the Sobolev inequality (cf.\ Proposition~\ref{prop:sob} below) which allows to control the $\ell^r$-norm with $r = r(d) = d/(d-2) > 1$ in terms of the Dirichlet form, and a control of the Dirichlet form of the solution of the Poisson equation \eqref{eq:pois_chi} (see Lemma~\ref{lem:moser:DF} below).  In the uniformly elliptic case, this is rather standard.  In our case where the conductances are  unbounded from above and below, we need to work with a dimension dependent weighted Sobolev inequality, which we obtain from H\"older's inequality.  That is, we replace the coefficient $r(d)$ by
\begin{align*}
  r(d,p,q) \;=\; \frac{d - d/p}{(d-2) + d/q}
\end{align*}
(cf.\ Remark \ref{rem:sob} below).  For the Moser iteration, we need $r(d,p,q) > 1$, of course, which is equivalent to $1/p + 1/q < 2/d$ appearing in \eqref{eq:moment_condition}.

Although we do not quite recover Biskup's optimal result in $d = 2$, we believe that our method is very efficient for the following reasons:  First, we present a proof in higher dimensions which does not rely on heat kernel estimates. Second, our method is very robust and can be extended to both the random graph setting (cf.\ \cite{PRS13}), provided some  a priori isoperimetric inequality, and also for the time-dynamic conductance models, cf.\ \cite{An12}.

Recently, in \cite{BM13}, Ba and Mathieu have established a QFCLT for diffusions in $\bbR^d$ with a locally integrable periodic potential.  Their approach is also based on a Sobolev-type inequality, where the sublinearity of the corrector is only obtained along the path of the process. 

Let us remark that our result applies to a random conductance model given by
\begin{align*}
  \om_{xy} = \exp(\phi(x) + \phi(y)),
  \qquad \{x,y\} \in E_d,
\end{align*}
where  for $d \geq 3$, $\{\phi(x) : x \in \bbZ^d\}$ is the discrete massless Gaussian free field, cf.\ \cite{BS12}.  In this case, the moment condition \eqref{eq:moment_condition} holds for any $p, q \in (0, \infty)$, of course.

Finally, in \cite{ADS13a}, we will apply  the adaptations of the Moser iteration technique in this work to derive both elliptic and parabolic Harnack inequalities under the assumptions of Theorem~\ref{thm:main}, which can be used to derive a quenched local limit theorem.

The paper is organized as follows:  In Section~\ref{sec:QFCLT}, we prove our main result, where we first recall the construction of the corrector.  Then we prove the sublinearity of the corrector \eqref{eq:sublin_intro} and complete the proof of the QFCLT.  In order to obtain \eqref{eq:sublin_intro} we first show that in a space-averaged $\ell^1$-norm the rescaled corrector vanishes in the limit.  We then need the Moser iteration technique, which provides us with a control on the maximum norm in terms of any  averaged $\ell^p$-norm.  This estimate is proven in a more general context in Section~\ref{sec:mos_it}.  Finally, the Appendix contains a collection of some elementary estimates needed in the proofs.

Throughout the paper, we write $c$ to denote a positive constant which may change on each appearance.  Constants denoted $C_i$ will be the same through each argument.

\section{Quenched invariance principle}
\label{sec:QFCLT}
\subsection{Harmonic embedding and the corrector}
\label{subsec:corr}
In this subsection, we first prove the existence of a corrector to the process $X$ such that $M_t = X_t - \chi(\om, X_t)$ is a martingale under $\Prob_{\!0}^{\om}$ for $\prob$ a.e.\ $\om$.  In a second step, we show an invariance principle for the martingale part.

\begin{definition}
  A measurable function, also called random field, $\Psi\!: \Omega \times \bbZ^d \to \bbR$ satisfies the \emph{cocycle property} if for $\prob$-a.e.\ $\om$, it holds that
  \begin{align*} 
    \Psi(\tau_x \om,y-x)
    \;=\;
    \Psi(\om,y) \,-\, \Psi(\om,x),
    \qquad \text{for all } x, y \in \bbZ^d.
  \end{align*}
  We denote by $L^2_{\mathrm{cov}} $ the set of functions $\Psi\!: \Om \times \bbZ^d \to \bbR$ satisfying the cocycle property such that
  \begin{align*}
    \Norm{\Psi}{L^2_{\mathrm{cov}}}^2
    \;\ldef\;
    \mean\!\Big[{\textstyle \sum_{x \in \bbZ^d}}\; \om_{0x}\, \Psi^2(\om, x) \Big] 
    \;<\; \infty.
  \end{align*}
\end{definition}
It can easily be checked that $L_{\mathrm{cov}}^2$ is a Hilbert space. 
\begin{lemma} \label{basicG}
  Consider a $\Psi \in L_{\mathrm{cov}}^2$. Then:
  \begin{enumerate}[(i)]
    \item for $\prob$-a.e.\ $\om$, $\Psi(\om,0) = 0$ and $\Psi(\tau_x \om,-x) = -\Psi(\om,x)$ for all $x \in \bbZ^d$.
    \item If  $x_0, x_1, \dots, x_n \in \bbZ^d$ then
      \begin{align} \label{eq:chain}
        \sum_{i=1}^n \Psi( \tau_{x_{i-1}} \om, x_i-x_{i-1})
        \;=\;
        \Psi(\om,x_n) - \Psi(\om,x_0). 
      \end{align}
  \end{enumerate}
\end{lemma}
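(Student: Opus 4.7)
The plan is to read everything off directly from the cocycle identity
\[
  \Psi(\tau_x \om, y - x) \;=\; \Psi(\om, y) - \Psi(\om, x), \qquad x, y \in \bbZ^d,
\]
which, by assumption on $\Psi \in L^2_{\mathrm{cov}}$, holds for all $x,y \in \bbZ^d$ on an event of full $\prob$-measure. The proof is essentially two substitutions; there is no real obstacle, only bookkeeping to make sure the null sets are chosen once and for all.

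For part (i), I would first fix an $\om$ in the full-measure event on which the cocycle property holds for every pair $(x,y)$. Taking $x = y = 0$ and using $\tau_0 = \mathrm{id}$ yields $\Psi(\om, 0) = \Psi(\om, 0) - \Psi(\om, 0) = 0$. Then, setting $y = 0$ in the cocycle identity and using what was just shown gives
\[
  \Psi(\tau_x \om, -x) \;=\; \Psi(\om, 0) - \Psi(\om, x) \;=\; -\Psi(\om, x)
\]
for all $x \in \bbZ^d$, proving the antisymmetry.

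For part (ii), I would apply the cocycle identity with the pair $(x_{i-1}, x_i)$ for each $i = 1, \ldots, n$ to obtain
\[
  \Psi(\tau_{x_{i-1}} \om, x_i - x_{i-1}) \;=\; \Psi(\om, x_i) - \Psi(\om, x_{i-1}),
\]
and then sum over $i$; the right-hand side telescopes to $\Psi(\om, x_n) - \Psi(\om, x_0)$, which is the claim \eqref{eq:chain}. The only point worth flagging is that the exceptional set of $\om$ on which the cocycle property fails may a priori depend on the pair $(x,y)$; one handles this by taking the countable union over $(x,y) \in \bbZ^d \times \bbZ^d$, which is still a $\prob$-null set, so a single full-measure event serves simultaneously for (i) and for all finite chains in (ii).
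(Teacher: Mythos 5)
Your proposal is correct and follows essentially the same route as the paper: both derive (i) by substituting $x=y=0$ and then $y=0$ into the cocycle identity, and (ii) by telescoping the identity along the chain. (Note that the paper's definition already states the cocycle property holds for $\prob$-a.e.\ $\om$ simultaneously for all $x,y$, so your null-set union is a harmless extra precaution rather than a needed step.)
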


\begin{proof}
  (i) follows immediately from the definition.  (ii) Since $\Psi$ satisfies the cocycle property, $\Psi(\tau_{x_{i-1}} \om, x_i-x_{i-1}) = \Psi(\om, x_i) - \Psi(\om,x_{i-1})$ and \eqref{eq:chain}. 
\end{proof}
Recall that $\Om = (0, \infty)^{E_d}$.  We say a function $\phi\!: \Om \to \bbR$ is \emph{local} if it only depends on the value of $\om$ at a finite number of edges.  We associate to $\phi$ a (horizontal) gradient $\mD \phi\!: \Om \times \bbZ^d \to \bbR$ defined by
\begin{align*}
  \mD \phi (\om,x)
  \;=\;
  \phi(\tau_x \om) - \phi(\om),
  \qquad x \in \bbZ^d.
\end{align*}
Obviously, if the function $\phi$ is bounded, $\mD \phi$ is an element of $L_{\mathrm{cov}}^2$. Following \cite{MP07}, we introduce an orthogonal decomposition of the space $L_{\mathrm{cov}}^2$.  Set
\begin{align*}
  L_{\mathrm{pot}}^2
  \;=\;
  \mathop{\mathrm{cl}}
  \big\{ \mD \phi \mid \phi\!: \Om \to \bbR\; \text{ local} \big\}
  \;\text{ in }\;  L_{\mathrm{cov}}^2,
\end{align*}
being the subspace of ''potential'' random fields and let $L_{\mathrm{sol}}^2$ be the orthogonal complement of $L_{\mathrm{pot}}^2$ in $L_{\mathrm{cov}}^2$ called ''solenoidal'' random fields.

In order to define the corrector, we introduce the \emph{position field} $\Pi\!: \Om \times \bbZ^d \to \bbR^d$ with $\Pi(\om,x) = x$.  We  write $\Pi_j$ for the $j$th coordinate of $\Pi$.  Since $ \Pi_j(\om, y-x) = \Pi_j(\om, y) -\Pi_j(\om, x)$, $\Pi_j$ satisfies the cocycle property.  Moreover,
\begin{align}
  \Norm{\Pi_j}{L_{\mathrm{cov}}^2}^2 
  \;=\;
  \mean\!\Big[{\textstyle \sum_x}\; \om_{0x} |x_j|^2\Big]
  \;=\;
  2 \mean[\om_{0e_j}]
  \;<\;
  \infty, 
\end{align}
where $e_j$ denotes the $j$th coordinate unit vector.  Hence, $\Pi^j \in L_{\mathrm{cov}}^2$.  So, we can define $\chi_j \in L_{\mathrm{pot}}^2$ and $\Phi_j \in L_{\mathrm{sol}}^2$ by the property
\begin{align*}
  \Pi_j
  \;=\;
  \chi_j \,+\, \Phi_j
  \;\in\;
  L_{\mathrm{pot}}^2 \oplus L_{\mathrm{sol}}^2.
\end{align*}
This gives a definition of the corrector $\chi = (\chi_1, \dots, \chi_d) : \Om \times \bbZ^d \to \bbR^d$.  Note that conventions about the sign of the corrector differ -- compare \cite{SS04} and \cite{Bi11}.  We set
\begin{align}\label{eq:def:M}
  M_t \;=\; \Phi(\om, X_t) \;=\; X_t - \chi(\om, X_t).
\end{align}
The following proposition summarizes the properties of $\chi$, $\Phi$ and $M$; see, for example, \cite{ABDH12}, \cite{BD10} or \cite{Bi11} for detailed proofs.
\begin{prop} \label{prop:constr_corr}
  Assume that $\prob$ satisfies the Assumption~\ref{ass:environment}. Then, for $\prob$-a.e.\ $\om$,
  \begin{align} \label{eq:Phi}
    \cL^\om \Phi(x)
    \;=\;
    \sum_{y\in \bbZ^d} \om_{xy} \big(\Phi(\om,y) - \Phi(\om,x) \big)
    \;=\;
    0,
    \qquad \Phi(\om,0) \;=\;0.
  \end{align}
  In other words, for $\prob$-a.e.\ $\om$ and for every $v \in \bbR^d$, $M$ and $v\cdot M$ are $\Prob^\om_0$-martingales.  The covariance process of the latter is given by
  \begin{align*}
    \langle v \cdot M \rangle_t
    \;=\;
    \int_0^t
      \sum_x (\tau_{X_s}\om )_{0x}\,
      \big(v \cdot \Phi(\tau_{X_s}\omega,x)\big)^2
    \md s.
  \end{align*}
  Moreover, provided that $\mean\!\big[1/\om(e)\big] < \infty$ for all $e \in E_d$, it holds that $\chi(\,\cdot\,,x) \in L^1(\prob)$ with $\mean[\chi(\om, x)] = 0$ for all $x \in \bbZ^d$.
\end{prop}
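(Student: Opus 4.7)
My plan is to handle the three claims in order: harmonicity of $\Phi$, the martingale/quadratic-variation statements, and finally the $L^1$ property of $\chi$.

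\textbf{Step 1: $\cL^{\om}\Phi = 0$.} The key input is that $\Phi_j \in L^2_{\mathrm{sol}}$, i.e.\ $\Phi_j$ is orthogonal to every $\mD\phi$ with $\phi\!:\Om\to\bbR$ local and bounded. This means
\begin{align*}
  \mean\!\Big[\sum_{x}\om_{0x}\,\Phi_j(\om,x)\big(\phi(\tau_x\om)-\phi(\om)\big)\Big] \;=\; 0.
\end{align*}
In the first half of the sum I will substitute $\om\mapsto\tau_{-x}\om$ under the expectation, using the stationarity of $\prob$, and then rewrite $(\tau_{-x}\om)_{0x}=\om_{-x,0}$ together with the cocycle identity $\Phi_j(\tau_{-x}\om,x)=-\Phi_j(\om,-x)$ from Lemma~\ref{basicG}(i). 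After a change of variable $x\mapsto -x$ this transforms the orthogonality into
\begin{align*}
  \mean\!\Big[\phi(\om)\,\sum_{x}\om_{0x}\,\Phi_j(\om,x)\Big] \;=\; 0
  \qquad\text{for every local bounded }\phi.
\end{align*}
Hence $\sum_{x}\om_{0x}\,\Phi_j(\om,x)=0$ for $\prob$-a.e.\ $\om$. Replacing $\om$ by $\tau_y\om$ (stationarity), using $(\tau_y\om)_{0x}=\om_{y,y+x}$ and the cocycle property $\Phi_j(\tau_y\om,x)=\Phi_j(\om,x+y)-\Phi_j(\om,y)$, one reads off $(\cL^{\om}\Phi_j)(y)=0$ simultaneously for all $y\in\bbZ^d$ on a set of full measure. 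The identity $\Phi(\om,0)=0$ is immediate from Lemma~\ref{basicG}(i), since $\Phi_j$ also satisfies the cocycle property.

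\textbf{Step 2: martingale and bracket.} With $\cL^{\om}\Phi=0$ in hand, Dynkin's formula for the minimal continuous-time Markov chain with generator $\cL^{\om}$ gives that each coordinate of $M_t=\Phi(\om,X_t)$ is a local martingale; stochastic completeness (guaranteed by Assumption~\ref{ass:environment}(i), as noted in the excerpt) plus a standard localization along $\{|X_s|\le R\}$ upgrades this to a genuine martingale once one checks $\Prob^{\om}_0$-integrability, which uses $\Phi_j\in L^2_{\mathrm{cov}}$ together with the explicit formula for the bracket. For the quadratic variation of $v\cdot M$, I will apply the general carré-du-champ identity
\begin{align*}
  \langle v\cdot M\rangle_t \;=\; \int_0^t \sum_{y}\om_{X_s,y}\,\big(v\cdot(\Phi(\om,y)-\Phi(\om,X_s))\big)^{2}\,\md s,
\end{align*}
and then use the cocycle property $\Phi(\om,y)-\Phi(\om,X_s)=\Phi(\tau_{X_s}\om,y-X_s)$ together with the change of variable $z=y-X_s$ and $\om_{X_s,y}=(\tau_{X_s}\om)_{0,z}$ to rewrite the integrand in the stated form.

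\textbf{Step 3: $\chi(\,\cdot\,,x)\in L^{1}(\prob)$ with zero mean.} By definition $\chi_j\in L^2_{\mathrm{pot}}$, so there exist local bounded $\phi_n$ with $\mD\phi_n\to\chi_j$ in $L^2_{\mathrm{cov}}$. For each coordinate direction $e_i$, Cauchy--Schwarz yields
\begin{align*}
  \mean\!\big[|\mD\phi_n(\om,e_i)-\mD\phi_m(\om,e_i)|\big]
  \;\leq\; \mean[\om_{0,e_i}^{-1}]^{1/2}\,\|\mD\phi_n-\mD\phi_m\|_{L^2_{\mathrm{cov}}},
\end{align*}
where the first factor is finite thanks to the extra assumption $\mean[1/\om(e)]<\infty$ and the second factor is Cauchy by construction. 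Therefore $\mD\phi_n(\cdot,e_i)\to\chi_j(\cdot,e_i)$ in $L^1(\prob)$, and since $\mean[\mD\phi_n(\om,e_i)]=0$ by stationarity, the limit has mean zero. The cocycle identity \eqref{eq:chain} along a nearest-neighbor path from $0$ to $x$ extends $L^1$-integrability and the vanishing of the mean from $e_i$ to arbitrary $x\in\bbZ^d$.

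The main obstacle is Step~1: the algebraic identity $\sum_x\om_{0x}\Phi_j(\om,x)=0$ is not built into the definition of $L^2_{\mathrm{sol}}$, and extracting it requires carefully combining the cocycle property, stationarity (to move the shift off $\phi$), and the antisymmetry $\Phi_j(\tau_{-x}\om,x)=-\Phi_j(\om,-x)$. Once this pointwise harmonicity is secured, Steps~2 and~3 are essentially routine applications of Dynkin's formula and of the Cauchy--Schwarz estimate above.
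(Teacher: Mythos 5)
Your plan is correct and follows essentially the same route as the detailed proofs the paper delegates to its references: orthogonality of $\Phi_j$ to horizontal gradients plus stationarity yields $\sum_{x}\om_{0x}\Phi_j(\om,x)=0$ for $\prob$-a.e.\ $\om$, shift-invariance upgrades this to pointwise harmonicity, Dynkin's formula together with the carr\'e du champ and the stationarity of the environment process gives the martingale property and the stated bracket, and Cauchy--Schwarz with $\mean[1/\om(e)]<\infty$ gives the $L^1$ and mean-zero statements for $\chi$. The only points left implicit --- the bound $\mean\big[\sum_x \om_{0x}|\Phi_j(\om,x)|\big]<\infty$ needed to pass from the weak identity tested against local bounded $\phi$ to a.s.\ vanishing, and the $\prob$-a.e.\ finiteness of $\Mean_0^{\om}\big[\langle v\cdot M\rangle_t\big]$ obtained by averaging over $\prob$ via the invariance in Lemma~\ref{lem:eta} --- are routine and consistent with your outline.
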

Based on the above construction, we now show that an invariance principle holds for $M$.  This is standard and follows from the ergodicity of the environment and the process of the \emph{environment as seen from the particle}.  We shall proceed as in \cite{ABDH12} and \cite{MP07}.

Recall that by the irreducibility of the random walk the ergodicity of the shift operator transfers to the process of the environment seen from the particle which is crucial in the proof of the invariance principle for the martingale part.  The environment seen from the particle is defined as the process $\{\tau_{X_t} \om : t \geq 0\}$, taking values in the environment space $\Om$, whose generator is given by
\begin{align*} 
  \widehat{\cL}\, \phi(\om)
  \;=\;
  \sum_{x \in \bbZ^d} \om_{0x}\, \big( \phi(\tau_x\om) - \phi(\om) \big)
\end{align*}
and the transition semigroup is given by
\begin{align*} 
  \widehat{\cP}_t \phi(\om)
  \;=\;
  \sum_{x \in \bbZ^d} p^\om(t,0,x)\, \phi(\tau_x\om),
  \qquad t \geq 0.
\end{align*}
\begin{lemma}\label{lem:eta}
  Suppose that Assumption \ref{ass:environment} holds.  Then the measure $\prob$ is stationary, reversible and ergodic for the environment process $\{\tau_{X_t}\om\}_{t \geq 0}$.
\end{lemma}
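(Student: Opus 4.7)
The plan is to verify the three claims in the obvious order: stationarity, reversibility, and ergodicity. All three are formal consequences of the symmetry $\om_{xy}=\om_{yx}$, the shift-invariance $\prob\circ\tau_x^{-1}=\prob$, and the fact that the nearest-neighbour conductances are $\prob$-a.s.\ strictly positive (Assumption~\ref{ass:environment}).

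For reversibility, fix two bounded local functions $\phi,\psi\!:\Om\to\bbR$ and expand
\begin{align*}
\mean\!\big[\phi\,\widehat{\cL}\psi\big]
\;=\;\mean\!\Big[{\textstyle \sum_{x}}\;\om_{0x}\,\phi(\om)\,\psi(\tau_x\om)\Big]
\;-\;\mean\!\Big[{\textstyle \sum_x}\;\om_{0x}\,\phi(\om)\,\psi(\om)\Big].
\end{align*}
In the first term, apply the shift $\tau_{-x}$ under the expectation and use $(\tau_{-x}\om)_{0x}=\om_{-x,0}=\om_{0,-x}$; the change of variable $x\mapsto -x$ then gives $\mean[\sum_x \om_{0x}\psi(\om)\phi(\tau_x\om)]$. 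Subtracting the common diagonal term yields $\mean[\psi\,\widehat{\cL}\phi]$, which is the desired symmetry; the first moment assumption $\mean[\om(e)]<\infty$ justifies the use of Fubini. Stationarity follows by taking $\phi\equiv 1$, i.e.\ $\mean[\widehat{\cL}\psi]=0$, and then extending to the semigroup in the standard way.

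For ergodicity, I would use the reversible Dirichlet form. If $\phi\in L^2(\prob)$ is bounded and $\widehat{\cP}_t\phi=\phi$ for all $t\geq 0$, then $\widehat{\cL}\phi=0$, and reversibility gives
\begin{align*}
0 \;=\; -\,\mean\!\big[\phi\,\widehat{\cL}\phi\big]
\;=\; \tfrac12\,\mean\!\Big[{\textstyle \sum_x}\;\om_{0x}\,\big(\phi(\tau_x\om)-\phi(\om)\big)^{\!2}\Big].
\end{align*}
Since by Assumption~\ref{ass:environment}(i) the nearest-neighbour conductances satisfy $\om_{0,e_i}>0$ $\prob$-a.s., each summand with $x=\pm e_i$ forces $\phi(\tau_{e_i}\om)=\phi(\om)$ $\prob$-a.s. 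Using the cocycle property of the shifts and taking countable unions of null sets, $\phi$ is invariant under $\tau_x$ for every $x\in\bbZ^d$ simultaneously on a set of full measure. Assumption~\ref{ass:environment}(ii) then implies $\phi$ is constant $\prob$-a.s., which is equivalent to ergodicity of the environment process.

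The only real subtlety — and the step to be careful about — is the passage from ``$\widehat{\cP}_t$-invariant'' to ``$\widehat{\cL}\phi=0$''. One needs $\phi$ (or at least a dense family of test functions) to lie in the domain of $\widehat{\cL}$ so that differentiating $\widehat{\cP}_t\phi=\phi$ at $t=0$ is legal, and one needs $\mean[\phi\widehat{\cL}\phi]$ to be finite so that the Dirichlet form identity applies; both are handled by approximation with bounded local functions, for which the summability $\mean[\sum_x \om_{0x}]<\infty$ is automatic from the first moment assumption. Everything else is bookkeeping.
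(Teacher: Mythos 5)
Your overall route is genuinely different from the paper's: the paper never touches the Dirichlet form, but instead takes an invariant set $A$ with $\widehat{\cP}_t \indicator_A = \indicator_A$, uses positivity of the heat kernel ($\sup_{t}p^\om(t,0,x)>0$ $\prob$-a.s., from positivity of the conductances) to conclude $\indicator_{A^c}(\om)\indicator_A(\tau_x\om)=0$ a.s.\ for every $x$, and then invokes ergodicity of $\prob$ under shifts. Your reversible--energy argument is a classical alternative and the skeleton (invariant function $\Rightarrow$ zero energy $\Rightarrow$ shift-invariance $\Rightarrow$ constant) is sound. However, the justification you offer for the key step is a genuine gap. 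Under Assumption~\ref{ass:environment} one only has $\mean[\mu^{\om}(0)]<\infty$, so for a bounded (even local) function $\phi$ one merely gets $|\widehat{\cL}\phi|\le 2\|\phi\|_\infty\,\mu^{\om}(0)\in L^1(\prob)$, not $L^2(\prob)$; bounded local functions therefore need not lie in the $L^2(\prob)$-domain of the generator, and your proposed ``approximation with bounded local functions'' neither legitimizes differentiating $\widehat{\cP}_t\phi=\phi$ at $t=0$ for the invariant $\phi$ (it is $\phi$ itself, not its approximants, that must be shown to have zero energy) nor identifies $-\mean[\phi\,\widehat{\cL}\phi]$ with the explicit jump form $\tfrac12\mean[\sum_x\om_{0x}(\phi(\tau_x\om)-\phi(\om))^2]$. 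As written, the central display in your ergodicity step is unproved.

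The gap can be closed, but not by the route you indicate. The cheapest repair stays at the semigroup level and needs only stationarity: for bounded $\phi$ with $\widehat{\cP}_t\phi=\phi$,
\begin{align*}
  \tfrac12\,\mean\Mean_0^{\om}\!\big[\big(\phi(\tau_{X_t}\om)-\phi(\om)\big)^2\big]
  \;=\;
  \mean\!\big[\phi\,(\phi-\widehat{\cP}_t\phi)\big]
  \;=\;
  0 ,
\end{align*}
so $\phi(\tau_{X_t}\om)=\phi(\om)$ a.s.\ for each fixed $t$; combining this with irreducibility ($\Prob_0^{\om}[X_t=x]>0$ for a.e.\ $\om$ and suitable $t$) gives $\phi\circ\tau_x=\phi$ $\prob$-a.s.\ for every $x$, and ergodicity of $\prob$ finishes the proof --- note this is essentially the paper's argument, generalized from indicators to bounded functions, and it makes your reversibility step unnecessary for ergodicity. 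Alternatively, if you insist on the Dirichlet form, use spectral calculus (for the self-adjoint contraction semigroup, $t^{-1}\langle\phi,\phi-\widehat{\cP}_t\phi\rangle$ is nonnegative and nondecreasing as $t\downarrow0$) together with Fatou and $\liminf_{t\downarrow0}t^{-1}p^{\om}(t,0,x)\ge\om_{0x}$ for $x\sim0$ to bound the energy from below by the explicit jump form; that gives the display you wrote without any domain considerations. Your reversibility and stationarity computations themselves are fine (and at the same level of rigor as the paper, which also passes from $\mean[\widehat{\cL}\phi]=0$ to invariance without comment).
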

\begin{proof}
  By the invariance of $\prob$ with respect to space shifts, we have that
  \begin{align*}
    \mean [\widehat{\cL}\, \phi]
    &\;=\;
    \sum_{x\in \bbZ^d}
    \Big(
      \mean\!\big[(\tau_x \om)_{-x0}\, \phi\circ \tau_x\big]
      - \mean\!\big[\om_{0x}\, \phi\big]
    \Big)
    \nonumber\\
    &\;=\;
    \sum_{x\in \bbZ^d}
    \Big(
      \mean\big[\om_{0,-x}\, \phi\big] -  \mean\big[\om_{0x}\, \phi\big]
    \Big)
    \;=\;
    0.
  \end{align*}
  Thus, $\prob$ is an invariant measure for $\{\tau_{X_t}\om\}$.  To prove that $\prob$ is also ergodic, let now $A \in \cF$ with $\widehat{\cP}_t \indicator_A = \indicator_A$.  Then, for all $\om \in \Om$ and all $t \geq 0$,
  \begin{align*}
    0
    \;=\;
    \indicator_{A^c}(\om)\, \big(\widehat{\cP}_t \indicator_A\big)(\om)
    \;=\;
    \sum_{x \in \bbZ^d} \indicator_{A^c}(\om)\, p^\om(t,0,x)\,
    \indicator_A(\tau_{x}\om).
  \end{align*}
  In particular, $\indicator_{A^c}(\om)\, p^\om(t,0,x)\, \indicator_A(\tau_{x}\om)= 0$  for all $\om \in \Om$, all $t \geq 0$ and every $x \in \bbZ^d$.  But from Assumption \ref{ass:environment} we can deduce that the random walk $X$ is irreducible in the sense that for every $x \in \bbZ^d$
  \begin{align*}
    \prob\!%
    \Big[
      \big\{ \om : \, \sup\nolimits_{t\geq 0}\, p^\om(t, 0, x) \,>\, 0 \big\}
    \Big]
    \;=\;
    1.
  \end{align*}
  Hence, for every $x \in \bbZ^d$
  \begin{align*}
    \indicator_{A^c}(\om) \cdot \indicator_A(\tau_{x}\om)
    \;=\;
    0
    \qquad \text{for } \prob\text{-a.e. } \om.
  \end{align*}
  Thus, there exists a set $N$ with $\prob[N] = 0$ such that the set $A \setminus N$ is invariant under $\tau_{x}$. Since $\prob$ is ergodic with respect to $\tau_{x}$, we conclude that $A$ is $\prob$-trivial and the claim follows.
\end{proof}
\begin{prop} \label{prop:mconv}
  Suppose Assumption \ref{ass:environment} holds and assume that $\mean\!\big[1/\om(e)\big] < \infty$ for any $e \in E_d$.  Further, let $M^{(n)}_t \ldef \frac{1}{n} M_{n^2t}$, $t \geq 0$.  Then, for $\prob$-a.e.\ $\om$, the sequence of processes $\{M^{(n)}\}$ converges in law in the Skorohod topology to a Brownian motion with a non-degenerate covariance matrix $\Si^2$ given by
  \begin{align*}
  \Si_{ij}^2
  \;=\;
  \mean\!%
  \Big[
    {\textstyle \sum_{x \in \bbZ^d}}\; \om_{0x}\, \Phi_i(\om, x)\, \Phi_j(\om, x)
  \Big].
  \end{align*}
\end{prop}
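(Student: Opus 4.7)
My approach is the standard martingale functional CLT. By the Cram\'er--Wold device, it suffices to fix $v \in \bbR^d$ and prove convergence in law (under $\Prob_{\!0}^\om$, for $\prob$-a.e.\ $\om$) of the real-valued martingale $v \cdot M^{(n)}$ to a Brownian motion of variance $v^T \Si^2 v$. I would invoke a functional CLT for locally square-integrable martingales (e.g.\ Helland's theorem, or Ethier--Kurtz, Theorem~7.1.4), whose hypotheses here reduce to (a) convergence of the quadratic variation $\langle v \cdot M^{(n)} \rangle_t \to t\, v^T\Si^2 v$ in $\Prob_{\!0}^\om$-probability for each $t > 0$, and (b) a conditional Lindeberg-type condition on the jumps.

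For (a), the covariance formula in Proposition~\ref{prop:constr_corr} and the substitution $u = s/n^2$ give
\[
  \langle v \cdot M^{(n)} \rangle_t
  \;=\; \frac{1}{n^2}\int_0^{n^2 t} \psi_v(\tau_{X_s}\om)\, \md s,
  \qquad
  \psi_v(\om) \;\ldef\; \sum_{x \in \bbZ^d} \om_{0x}\,\big(v \cdot \Phi(\om,x)\big)^2.
\]
Since $\Phi \in L^2_{\mathrm{cov}}$ we have $\psi_v \in L^1(\prob)$ with $\mean[\psi_v] = v^T \Si^2 v$. By Lemma~\ref{lem:eta}, $\prob$ is an ergodic invariant measure for the environment process $\{\tau_{X_s}\om\}_{s \geq 0}$, so Birkhoff's pointwise ergodic theorem (continuous-time version) yields $\tfrac{1}{T}\int_0^T \psi_v(\tau_{X_s}\om)\,\md s \to v^T \Si^2 v$, $\Prob_{\!0}^\om$-a.s.\ for $\prob$-a.e.\ $\om$; taking $T = n^2 t$ gives~(a).

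For (b), I would show that for every $\ep > 0$ and $\prob$-a.e.\ $\om$ the nonnegative pure-jump process $J_t^{(n)} \ldef \sum_{s \leq t}|\Delta(v \cdot M^{(n)})_s|^2\,\indicator_{|\Delta(v \cdot M^{(n)})_s| > \ep}$ tends to $0$ in $\Prob_{\!0}^\om$-probability. Using the cocycle property, a jump of $M$ between neighbours $x$ and $y$ equals $\Phi(\tau_x\om, y-x)$, so $J^{(n)}$ has predictable compensator
\[
  A_t^{(n)}
  \;=\; \frac{1}{n^2} \int_0^{n^2 t} f_n(\tau_{X_s}\om)\, \md s,
  \qquad
  f_n(\om) \;\ldef\; \sum_{x \in \bbZ^d} \om_{0x}\, \big(v \cdot \Phi(\om,x)\big)^2\, \indicator_{|v \cdot \Phi(\om,x)| > \ep n}.
\]
Pointwise $0 \leq f_n \leq \psi_v$ and $f_n \downarrow 0$, so the monotonicity $f_n \leq f_N$ (for $n \geq N$) together with the ergodic theorem applied to the fixed integrand $f_N$ gives $\limsup_n A_t^{(n)} \leq t\, \mean[f_N]$, $\prob$- and $\Prob_{\!0}^\om$-a.s.; letting $N \to \infty$ shows $A_t^{(n)} \to 0$ almost surely, and Lenglart's inequality transfers this to $J_t^{(n)} \to 0$ in $\Prob_{\!0}^\om$-probability.

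Finally, non-degeneracy of $\Si^2$ uses $\mean[1/\om(e)] < \infty$: by Proposition~\ref{prop:constr_corr}, $\mean[\chi(\om,e_i)] = 0$, so $\mean[v \cdot \Phi(\om, e_i)] = v \cdot e_i$, and Cauchy--Schwarz applied with the weight $\om_{0 e_i}$ gives $|v \cdot e_i|^2 \leq \mean[\om_{0 e_i}(v \cdot \Phi(\om, e_i))^2] \cdot \mean[1/\om_{0 e_i}]$. Summing over $i = 1, \ldots, d$ and noting that these $d$ nearest-neighbour terms are bounded above by $v^T \Si^2 v$ yields $|v|^2 \leq (\max_i \mean[1/\om_{0 e_i}])\, v^T \Si^2 v$, so $\Si^2$ is strictly positive definite. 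The main obstacle I expect is (b): the jumps of $v \cdot M$ inherit the potentially unbounded size of the corrector at each traversed edge, which rules out any pathwise uniform bound; the compensator-plus-Lenglart route circumvents this by showing that atypically large jumps are too rare (in the sense of the ergodic theorem) to contribute.
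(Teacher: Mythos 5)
Your proposal is correct and follows essentially the same route as the paper: the paper's proof invokes Helland's martingale functional CLT, with the convergence of the quadratic variation of $M^{(n)}$ supplied by the ergodicity of the environment process (Lemma~\ref{lem:eta}), and cites Biskup's Proposition~4.1 for non-degeneracy. You simply fill in the details the paper delegates to \cite{He82}, \cite{ABDH12}, \cite{MP07} and \cite{Bi11} — the Lindeberg condition via the compensator and Lenglart's inequality, and the Cauchy--Schwarz argument for strict positivity of $\Si^2$ using $\mean[1/\om(e)]<\infty$ — and these details are accurate.
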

\begin{proof}
  The proof is based on the martingale convergence theorem by Helland (see Theorem 5.1a) in \cite{He82}); see \cite{ABDH12} or \cite{MP07} for details. The argument is based on the fact that the quadratic variation of $M^{(n)}$ converges, for which the ergodicity of the environment process in Lemma~\ref{lem:eta} is needed.  Finally, we refer to Proposition 4.1 in \cite{Bi11} for a proof that $\Si^2$ is nondegenerate.
\end{proof}

\subsection{Sublinearity of the corrector} \label{sect:sublin}
To start with, let us denote by $B(x,r)$ a closed ball with respect to the graph distance with center $x \in \bbZ^d$ and radius $r$.  To lighten notation, we write $B(r) = B(0,r)$ and $\chi^{(n)}(\om, x) \ldef \frac{1}{n}\, \chi(\om, x)$.  The cardinality of $A \subset \bbZ^d$ is denoted by $|A|$.  Further, for a nonempty, finite $A \subset \bbZ^d$, we define a locally space-averaged norm on functions $f\!: \bbZ^d \to \bbR$ by
\begin{align*}
  \Norm{f}{p,A}
  \;\ldef\;
  \bigg(
    \frac{1}{|A|}\; \sum_{x \in A}\, |f(x)|^p
  \bigg)^{\!\!1/p}, \qquad p\in[1,\infty).
\end{align*}
The key ingredient in the proof of Theorem~\ref{thm:main} is the sublinearity of the corrector which we formalize as
\begin{prop} \label{prop:sublin_corr}
  Let $d \geq 2$ and suppose that Assumption \ref{ass:environment} and the moment condition \eqref{eq:moment_condition} hold.  Then, for any $L \geq 1$ and $j = 1, \ldots, d$,
  \begin{align} \label{eq:sublin_corr}
    \lim_{n \to \infty} \max_{x \in B(L n)} \big| \chi_j^{(n)}(\om, x) \big|
    \;=\;
    0,
    \qquad \prob\text{- a.s.}
  \end{align}
\end{prop}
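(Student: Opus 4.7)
The plan is to implement the two-step strategy outlined in the introduction: (i) establish sublinearity of the rescaled corrector in a space-averaged $\ell^1$-norm over $B(Ln)$, and (ii) upgrade this to the claimed supremum bound using the Moser maximal inequality of Section~\ref{sec:mos_it}.

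For step (i), the moment condition $\mean[1/\om(e)] < \infty$ together with Proposition~\ref{prop:constr_corr} gives $\chi_j(\cdot, e_i) \in L^1(\prob)$ with zero expectation. Combined with the cocycle property (Lemma~\ref{basicG}(ii)), the spatial ergodic theorem on balls in $\bbZ^d$ then yields axis-sublinearity, $n^{-1} \chi_j(\om, n e_i) \to 0$ almost surely, and --- via a compactness-and-identification argument --- the space-averaged sublinearity
\[
\lim_{n \to \infty}\, \Norm{\chi_j^{(n)}(\om,\cdot)}{1, B(Ln)} \;=\; 0, \qquad \prob\text{-a.s.}
\]
To run the identification, extend $\chi_j^{(n)}$ to a piecewise-constant function $u_n(y) = \chi_j^{(n)}(\om, \lfloor n y \rfloor)$ on $[-L,L]^d$. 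Precompactness of $\{u_n\}$ in $L^1([-L,L]^d)$ is inherited from $\Norm{\chi_j}{L^2_{\mathrm{cov}}} < \infty$ combined with the spatial ergodic theorem for $\sum_y \om_{0y}\,\chi_j(\om, y)^2$, which yields uniform control on the discrete gradients of $u_n$. Any $L^1$-cluster point $u$ satisfies $u(y+v) = u(y)$ for every $v$, because the incremental ratios $u_n(y+v) - u_n(y)$ reduce via the cocycle property to $n^{-1}\chi_j(\tau_{\lfloor n y \rfloor}\om, \lfloor n v \rfloor)$ and therefore vanish by the axis sublinearity; the common constant value of $u$ is then forced to be zero by $\mean[\chi_j(\cdot, x)] = 0$.

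For step (ii), the corrector is a solution of the discrete Poisson equation $\cL^\om \chi_j = \cL^\om \Pi_j$, the right-hand side being $\sum_y \om_{xy}(y_j - x_j)$, of divergence form. The Moser maximal inequality of Section~\ref{sec:mos_it} then controls $\max_{B(Ln)} |\chi_j^{(n)}|$ by a combination of averaged $\ell^{p_0}$-norms of $\chi_j^{(n)}$ on $B(2Ln)$ and of the source, with $\om$-dependent prefactors that involve only spatial averages of $\om^p$ and $\om^{-q}$ on enlarged balls; by \eqref{eq:moment_condition} and the spatial ergodic theorem these prefactors stabilise $\prob$-almost surely. Interpolating the $\ell^1$-bound from step~(i) against a uniform $\ell^{p_0}$-bound --- itself a by-product of the Moser iteration applied as a mean-value inequality on a slightly larger ball --- yields $\Norm{\chi_j^{(n)}}{p_0, B(2Ln)} \to 0$ and hence \eqref{eq:sublin_corr}.

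The principal obstacle lies in step~(ii): under degenerate conductances, the classical Sobolev contraction exponent $d/(d-2)$ of uniformly elliptic Moser iteration must be replaced by the weighted exponent
\[
r(d,p,q) \;=\; \frac{d - d/p}{(d-2) + d/q},
\]
which strictly exceeds $1$ precisely under the assumption $1/p + 1/q < 2/d$. Executing the iteration in this setting --- tracking the H\"older contributions of $\om^p$ and $\om^{-q}$ at every step, and ensuring that the prefactors remain almost surely finite via the ergodic theorem --- is the technical heart of Section~\ref{sec:mos_it}; once that tool is in place, the argument above delivers the proposition.
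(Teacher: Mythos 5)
Your overall architecture (space-averaged $\ell^1$-sublinearity of $\chi_j^{(n)}$, upgraded to a maximum bound through the Moser iteration of Section~\ref{sec:mos_it}) is the same as the paper's, and your step (ii) is sound in substance — in fact the paper's Corollary~\ref{cor:MP:general} already allows an arbitrary exponent $\al>0$ on the right-hand side, so taking $\al=1$ makes your extra interpolation against a uniform $\ell^{p_0}$-bound unnecessary (and that uniform bound is not otherwise available a priori from Theorem~\ref{thm:MP} alone).

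The genuine gap is in step (i). First, precompactness: a uniform bound on the total variation of $u_n$ (which does follow from \eqref{eq:lim_form}) does not give $L^1$-precompactness of $\{u_n\}$ without a uniform $L^1$-bound on the functions themselves — which is essentially what you are trying to prove; at best you get compactness of $u_n$ minus its box average, and the averages still have to be handled. Second, identification: your argument that a cluster point is translation invariant rests on the vanishing of $n^{-1}\chi_j(\tau_{\lfloor ny\rfloor}\om,\lfloor nv\rfloor)$ "by axis sublinearity", but axis sublinearity is a Birkhoff statement along a line through a \emph{fixed} base point; it does not transfer to base points $\lfloor ny\rfloor$ that move with $n$ (this lack of uniformity in the base point is precisely why sublinearity on average is nontrivial, and why the two-dimensional arguments of Berger--Biskup need extra structure). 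Finally, even granting constancy, the assertion that the constant "is forced to be zero by $\mean[\chi_j(\cdot,x)]=0$" is not a one-liner: the quantity $n^{-d}\sum_{x\in C(n)}\chi_j^{(n)}(\om,x)$ is not an ergodic average of a single $L^1(\prob)$ observable (the summands $\chi_j(\om,x)$ with $|x|\sim n$ are not translates of one function), so the ergodic theorem does not identify its limit directly. This is exactly the content of the paper's Lemma~\ref{lem:cubes}: one writes $\chi_j(\om,x)$ via the cocycle property as a telescoping sum of $\chi_j(\tau_{y+k e_d}\om,e_d)$, applies the spatial ergodic theorem coordinate by coordinate with the weights $([nL_d]-x_d)/n$, and then uses invariance of the limit under all shifts $\tau_{e_i}$ together with ergodicity to conclude it vanishes. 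Without an argument of this type (or the cube-partition plus $\ell^1$-Poincar\'e argument of Proposition~\ref{prop:conv_l1_corr} that converts it into \eqref{eq:l1:conv}), your step (i) does not close, and with it the proposal as a whole.
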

The proof is based on both ergodic theory and purely analytic tools.  Using the spatial ergodic theorem, we show in a first step that the corrector $\chi^{(n)}$ averaged over cubes with side length of order $n$ vanishes $\prob$-a.s.\ when $n$ tends to infinity.  In a second step, we show by means of the $\ell^1$-Poincar\'e inequality on $\bbZ^d$ that $\chi^{(n)}$ also converges to zero in the $\Norm{\,\cdot\,}{1,B(n)}$-norm.  The final step uses the maximum inequality, which we establish in the next section, to bound from above the maximum of $\chi^{(n)}$ in $B(n)$ by $\Norm{\chi^{(n)}}{1,B(n)}$. 

We start with some immediate consequences from the ergodic theorem.  To simplify notation, let us define the following measures $\mu^{\om}$ and $\nu^{\om}$ on $\bbZ^d$:
\begin{align*}
  \mu^{\om}(x)
  \;=\;
  \sum_{x \sim y}\, \om_{xy}
  \qquad \text{and} \qquad
  \nu^{\om}(x)
  \;=\;
  \sum_{x \sim y}\, \frac{1}{\om_{xy}}.
\end{align*}
\begin{lemma} \label{lem:ergodic_const}
  Suppose $\mean\!\big[(\om(e))^p\big] < \infty$ and $\mean\!\big[(1/\om(e))^q\big] < \infty$ for some $p,q \in [1, \infty)$.  Then, for $\prob$-a.e.\ $\om$,
  \begin{align*}
    \lim_{n\to \infty} \Norm{\mu^{\om}}{p, B(n)}^p
    \;=\;
    \mean\!\big[ \mu^{\om}(0)^p\big]
    \qquad \text{and} \qquad
    \lim_{n\to \infty} \Norm{\nu^{\om}}{q, B(n)}^q
    \;=\;
    \mean\big[\nu^{\om}(0)^q\big]
  \end{align*}
  Further, if $\mean\!\big[1/\om(e)\big] < \infty$ then we have for every $j = 1, \ldots, d$,
  \begin{align} \label{eq:lim_form}
    \lim_{n\to \infty} 
    \frac{1}{|B(n)|}
    \sum_{\substack{x,y \in B(n)\\ x \sim y}}\mspace{-6mu}
    \big| \chi_j(\om,y) - \chi_j(\om,x) \big|
    \;\leq\;
    \mean\!\big[\nu^{\om}(0)\big]^{1/2}\, \Norm{\chi_j}{L_{\mathrm{cov}}^2}
  \end{align}
\end{lemma}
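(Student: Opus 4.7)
The first two limits are direct applications of the multidimensional pointwise ergodic theorem (e.g.\ Tempelman's theorem) for the ergodic $\bbZ^d$-action $\{\tau_x\}$ on $(\Om, \cF, \prob)$; the closed balls $B(n)$ form a Følner sequence. Since $\mu^\om(x)^p = \mu^{\tau_x\om}(0)^p$ and $\nu^\om(x)^q = \nu^{\tau_x\om}(0)^q$, the asserted convergence reduces to checking integrability of $\mu^\om(0)^p$ and $\nu^\om(0)^q$, which follows from the hypothesized moments $\mean[\om(e)^p]<\infty$ and $\mean[(1/\om(e))^q]<\infty$ together with Jensen's (or Minkowski's) inequality applied to the finite sum over the $2d$ neighbors of the origin.

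For the third estimate I would proceed in three steps. First, the cocycle property gives $\chi_j(\om,y)-\chi_j(\om,x) = \chi_j(\tau_x\om, y-x)$ whenever $x\sim y$; dropping the constraint $y \in B(n)$ in the inner sum only enlarges the expression, so it suffices to bound
$$\frac{1}{|B(n)|} \sum_{x \in B(n)} \sum_{y \sim x} \big|\chi_j(\tau_x\om, y-x)\big|.$$
Second, applying the Cauchy--Schwarz inequality to the inner sum with the splitting $1 = (1/\om_{xy})^{1/2}\cdot \om_{xy}^{1/2}$ yields
$$\sum_{y\sim x}\big|\chi_j(\tau_x\om, y-x)\big| \;\leq\; \nu^\om(x)^{1/2}\, F(\tau_x\om)^{1/2}, \qquad F(\om) \ldef \sum_{y\sim 0} \om_{0y}\, \chi_j(\om,y)^2,$$
where by definition $\mean[F] = \Norm{\chi_j}{L_{\mathrm{cov}}^2}^2 < \infty$. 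Third, a further Cauchy--Schwarz in the outer sum over $x \in B(n)$ decouples the two factors into two spatial averages, namely of $\nu^\om(x)$ and of $F(\tau_x\om)$; applying the ergodic theorem (exactly as in step one) to each average gives in the limit $\mean[\nu^\om(0)]^{1/2}\cdot \mean[F]^{1/2}$, which is precisely the right-hand side of \eqref{eq:lim_form}.

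No genuine obstacle is expected here: the argument is essentially the cocycle identity, two rounds of Cauchy--Schwarz, and the ergodic theorem, all of whose hypotheses are furnished directly by Assumption~\ref{ass:environment}, the assumption $\mean[1/\om(e)]<\infty$, and the membership $\chi_j \in L_{\mathrm{cov}}^2$. The only piece of bookkeeping worth flagging is the passage from the constrained double sum over edges inside $B(n)$ to the unconstrained sum over all neighbors of each $x\in B(n)$; it is what turns the identity into the inequality stated in \eqref{eq:lim_form}, and it is essential since one wants to translate each boundary term by $\tau_x$ so that the ergodic theorem can be invoked cleanly.
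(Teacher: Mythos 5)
Your proposal is correct and follows essentially the same route as the paper: cocycle property, dropping the constraint $y \in B(n)$, Cauchy--Schwarz, and the spatial ergodic theorem, with the moment hypotheses supplying integrability. The only (immaterial) difference is that you apply Cauchy--Schwarz pathwise and invoke the ergodic theorem twice, for the averages of $\nu^{\om}(x)$ and of $F(\tau_x\om)$, whereas the paper applies the ergodic theorem once to the single $L^1(\prob)$ function $\om \mapsto \sum_{0 \sim x}|\chi_j(\om,x)|$ and uses Cauchy--Schwarz under the expectation to bound its mean by $\mean\!\big[\nu^{\om}(0)\big]^{1/2}\,\Norm{\chi_j}{L_{\mathrm{cov}}^2}$.
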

\begin{proof}
  The first two assertions are an immediate consequence of the spatial ergodic theorem.  For instance, we have
  \begin{align*} 
    \lim_{n\to \infty} \Norm{\mu^{\om}}{p, B(n)}^p
    \;=\;
    \lim_{n\to \infty}\frac{1}{|B(n)|}
    \sum_{x \in B( n)}\! \big( \mu^{\tau _x \om}(0) \big)^p
    \;=\;
    \mean\!\big[\mu^{\om}(0)^p\big].
  \end{align*}
  To prove the last assertion, notice first that by the Cauchy--Schwarz inequality we have for every $j = 1, \ldots, d$,
  \begin{align*}
    \mean\!\Big[{\textstyle \sum_{0 \sim x}}\, |\chi_j(\om, x)|\Big]^2
    \;\leq\;
    \mean\!\big[\nu^{\om}(0)\big]\;
    \mean\!\Big[
      {\textstyle \sum_{0 \sim x}}\, \om_{0x} |\chi_j(\om,x)|^2
    \Big]
    \;=\;
    \mean\!\big[\nu^{\om}(0)\big]\, \Norm{\chi_j}{L_{\mathrm{cov}}^2}^2.
  \end{align*}
  Due to the fact that $\mean\!\big[1/\om(e)\big] < \infty$ for all $e \in E_d$ and $\chi_j \in L_{\mathrm{cov}}^2$ the right-hand side of the equation above is finite.  Moreover, by the cocycle property it holds that $\chi_j(\om,y) - \chi_j(\om,x) = \chi_j(\tau_x \om, y-x)$.  Thus, a further application of the spatial ergodic theorem yields
  \begin{align*} 
    \lim_{n \to \infty}  \frac{1}{|B(n)|}
    \sum_{\substack{x,y \in B(n)\\ x \sim y}}\mspace{-12mu}
    \big| \chi_j(\tau_x \om, y-x) \big|
    \;\leq\;
    \mean\!\Big[{\textstyle \sum_{0 \sim x}}\, |\chi_j(\om,x)|\Big]
    \;\leq\;
    \mean\!\big[\nu^{\om}(0)\big]^{1/2} \Norm{\chi_j}{L_{\mathrm{cov}}^2}.
  \end{align*}
\end{proof}
\begin{lemma} \label{lem:cubes}
  Suppose Assumption \ref{ass:environment} holds and assume that $\mean\!\big[1/\om(e)\big] < \infty$.  Let $C$ be any cube in $\bbR^d$ of the form $C = \prod_{i=1}^d [a_i,b_i]$  with $a_i < b_i$, $i = 1, \ldots, d$ and set $C(n) \ldef n C \cap \bbZ^d$.  Then, for any $j = 1, \ldots, d$ and $\prob$-a.e.\ $\om$,
  \begin{align}\label{eq:cubes}
    \lim_{n \to \infty} \frac{1}{n^d} \sum_{x\in C(n)} \chi_j^{(n)}(\om,x)
    \;=\;
    0.
  \end{align}
\end{lemma}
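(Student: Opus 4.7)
The plan is to telescope $\chi_j(\om,x)$ into elementary cocycle increments $\psi_k(\om):=\chi_j(\om,e_k)$ along a coordinate path from $0$ to $x$, reorganise the resulting double sum, and apply a weighted multi-parameter Birkhoff theorem. The structural input that makes this work is not just that $\chi_j$ is a zero-mean cocycle, but that it belongs to $L^2_{\mathrm{pot}}$; this will force certain conditional means to vanish.

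Without loss of generality I would assume $C=[0,b_1]\times\cdots\times[0,b_d]$ has a corner at the origin, the general cube being recovered by inclusion--exclusion on the indicator $\indicator_C$. For $x\in C(n)$, iterating the cocycle property along the path $0\to x_1e_1\to x_1e_1+x_2e_2\to\cdots\to x$ and setting $x^{(\ell)}=(x_1,\ldots,x_\ell,0,\ldots,0)$ gives
\[
\chi_j(\om,x)\;=\;\sum_{k=1}^d\sum_{i=0}^{x_k-1}\psi_k\!\big(\tau_{x^{(k-1)}+ie_k}\,\om\big).
\]
Swapping the orders of summation and computing the multiplicity with which each shift appears, $\frac{1}{n^{d+1}}\sum_{x\in C(n)}\chi_j(\om,x)$ then decomposes as $\sum_{k=1}^d T_k^{(n)}(\om)$ with
\[
T_k^{(n)}(\om)\;=\;\big(1+o(1)\big)\cdot\frac{1}{n^k}\sum_{y\in n\tilde C_k\cap\bbZ^k}g_k(y/n)\,\psi_k\!\big(\tau_{(y,\mathbf{0})}\,\om\big),
\]
where $\tilde C_k=[0,b_1]\times\cdots\times[0,b_k]$ and $g_k$ is a bounded, Riemann integrable weight coming from the multiplicity count.

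Next I would invoke the weighted multi-parameter Birkhoff theorem for the $\bbZ^k$-sub-action generated by $\tau_{e_1},\ldots,\tau_{e_k}$. Note that $\psi_k\in L^1(\prob)$ by Cauchy--Schwarz, using $\chi_j\in L^2_{\mathrm{cov}}$ together with $\mean[1/\om(e)]<\infty$. Writing $\cI_k$ for the $\sigma$-algebra of sets invariant under $\tau_{e_1},\ldots,\tau_{e_k}$, the theorem yields
\[
T_k^{(n)}(\om)\;\xrightarrow[n\to\infty]{}\;\mean[\psi_k\mid\cI_k]\cdot\int g_k\qquad\text{$\prob$-a.s.}
\]

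The hard part, and the only genuinely non-routine step, is to show $\mean[\psi_k\mid\cI_k]=0$ a.s.; the full $\bbZ^d$-ergodicity alone is insufficient, since it does not imply $\tau_{e_k}$-ergodicity, so one must use that $\chi_j$ is potential. Since $\cI_k\subseteq\cI_{e_k}$, the tower property reduces matters to $\mean[\psi_k\mid\cI_{e_k}]=0$. For a bounded local $\phi$ one has $D\phi(\om,e_k)=\phi(\tau_{e_k}\om)-\phi(\om)$, and for every $\cI_{e_k}$-measurable bounded test function $h$, the $\tau_{e_k}$-invariance of $\prob$ and of $h$ gives $\mean[h\,\phi\circ\tau_{e_k}]=\mean[h\,\phi]$, whence $\mean[D\phi(\cdot,e_k)\mid\cI_{e_k}]=0$. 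Since $\chi_j$ is an $L^2_{\mathrm{cov}}$-limit of such gradients $D\phi_m$, Cauchy--Schwarz combined with $\mean[1/\om(e)]<\infty$ upgrades this to $L^1(\prob)$-convergence $D\phi_m(\cdot,e_k)\to\chi_j(\cdot,e_k)$, and the $L^1$-contractivity of conditional expectation then gives $\mean[\chi_j(\cdot,e_k)\mid\cI_{e_k}]=0$. Plugging back in, each $T_k^{(n)}\to 0$ a.s., which proves \eqref{eq:cubes}.
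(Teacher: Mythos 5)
Your proof is correct, but it resolves the one genuinely delicate point by a different mechanism than the paper. Both arguments telescope $\chi_j$ into increments $\chi_j(\tau_y\om,e_k)$ and both must confront the fact that $\prob$ is only assumed ergodic for the full $\bbZ^d$-action, so the $k$-dimensional sub-averages ($k<d$) need not converge to their means. The paper telescopes one coordinate at a time: the weighted $d$-dimensional average of $\chi_j(\tau_x\om,e_d)$ converges to a multiple of $\mean[\chi_j(\om,e_d)]=0$ by the full ergodic theorem, while for the lower-dimensional remainders it only records that the limits $F_k$ exist a.s.\ and in $L^1(\prob)$, have mean zero, and are invariant under $\tau_{e_1}$; symmetrizing over the order of the coordinates makes the total limit $F$ invariant under every $\tau_{e_i}$, and ergodicity of the full group then forces $F=\mean[F]=0$. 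You instead telescope all coordinates at once, identify each sub-average limit as $\mean[\chi_j(\cdot,e_k)\mid\cI_k]\int g_k$ via the weighted multi-parameter ergodic theorem, and kill the conditional expectation using that $\chi_j\in L^2_{\mathrm{pot}}$: indeed $\mean[\mD\phi(\cdot,e_k)\mid\cI_{e_k}]=0$ for bounded local $\phi$, Cauchy--Schwarz with $\mean[1/\om(e)]<\infty$ turns $L^2_{\mathrm{cov}}$-convergence of approximating gradients into $L^1(\prob)$-convergence of the increments, and $L^1$-contractivity of conditional expectation plus the tower property over $\cI_k\subseteq\cI_{e_k}$ finishes the step. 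This is sound, and it even yields the stronger conclusion that each $T_k^{(n)}\to0$ separately. The trade-off: the paper's route uses only that $\chi_j$ is a mean-zero $L^1$ cocycle together with ergodicity of the full group, so it applies to any such cocycle, whereas yours exploits the extra potential structure of the corrector but avoids the symmetry/invariance bookkeeping and is more flexible when permuting the telescoping order is awkward. In a final write-up you should make explicit that the $O(n^{d-k-1})$ error in the multiplicity count is harmless because $n^{-k}\sum_y|\chi_j(\tau_y\om,e_k)|$ has an a.s.\ finite limit, and that for cubes with negative coordinates the telescoping uses $\chi_j(\tau_x\om,-e_k)=-\chi_j(\tau_{x-e_k}\om,e_k)$; the paper glosses over the same reduction.
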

\begin{proof}
  We will restrict the proof to the case where $C$ is of the form $C = \prod_{i=1}^d [0,L_i]$ with $L_i > 0$, $i = 1, \ldots, d$.  For general $C$, the statement follows by similar arguments.  We will proceed as in \cite[pp.\ 229--230]{SS04}.  Let us denote by $C^j(n) \ldef \prod_{i=1}^j [0, nL_i ] \times \{0\}^{d-j}$, $j = 1, \ldots, d$.  When $x = (x_1,\ldots, x_d) \in \bbZ^d$, we write $x = (y, x_d)$ with $y = (x_1, \ldots, x_{d-1}) \in \bbZ^{d-1}$, and we identify $\bbZ^{d-1}$ with $\bbZ^{d-1} \times \{0\} \subseteq \bbZ^d$.  Then, by Lemma~\ref{basicG}, we have for $\prob$-a.e.\ $\om$ and for any $j = 1, \ldots, d$,
  \begin{align*}
    \frac{1}{n^d}\,&\sum_{x \in C^d(n)} \chi_j^{(n)}(\om,x)
    \\
    &\;=\;
    \frac{1}{n^d} \sum_{\substack{y \in C^{d-1}(n)\\ 0\leq x_d \leq  n L_d}}
    \Bigg(
      \chi_j^{(n)}(\omega,y) \,+\!
      \sum_{k=0}^{x_d  - 1}\! \chi_j^{(n)}\big(\tau_{y + k e_d} \om, e_d\big)
    \Bigg)
    \\
    &\;=\;
    \frac{[nL_d]+1}{n} \frac{1}{n^{d-1}} \sum_{y \in C^{d-1}(n)}
    \chi_j^{(n)}(\om, y)
    \,+\,
    \frac{1}{n^d} \sum_{x \in C^d(n)} \mspace{-6mu} \frac{[n L_d] - x_d}{n}\,
    \chi_j(\tau_x \om, e_d).
  \end{align*}
  (Here $[\cdot]$  denotes the integer part.)  Since $\mean[|\chi_j(\om, x)|] < \infty$ for all $x \in \bbZ^d$, an application of the spatial ergodic theorem (see Theorem 3 in \cite{BD03}) gives that $\prob$-a.s.\ and in $L^1(\prob)$
  \begin{align*}
    \lim_{n\to \infty} \frac{1}{n^d} \sum_{x \in C^d(n)} \frac{[nL_d]-x_d}{n}\,
    \chi_j(\tau_x \om, e_d)
    \;=\;
    \int_C \big(L_d - v_d\big)\, \md v \, \mean\!\big[\chi_j(\om, e_d)\big]
    \;=\;
    0.
  \end{align*}
  The claim follows now by induction.  Indeed, in each step we use the spatial ergodic theorem with respect to the subgroup of space shifts to obtain that the limit
  \begin{align*}
    \lim_{n\to \infty} \frac 1 {n^{d-k}}
    \sum_{x \in C^{d-k}(n)} \mspace{-12mu} \frac{[n L_{d-k}] - x_{d-k}}{n}\,
    \chi_j^{(n)}(\tau_x\om, e_{d-k})
    \;\rdef\;
    F_k(\om),
  \end{align*}
  exists $\prob$-a.s.\ and in $L^1(\prob)$ for every $k = 0, \ldots, d-1$.  Further, $\mean[F_k] = 0$ and by construction it is clear that $F_k \circ \tau_{e_1} = F_k\;$ $\prob$-a.s.\ for every $k$.  Hence,
  \begin{align*}
    \lim_{n\to\infty} \frac{1}{n^d}\, \sum_{x \in C(n)} \chi_j^{(n)}(\om, x)
    \;\rdef\;
    F(\om),
  \end{align*}
  also exists $\prob$-a.s.\ and in $L^1(\prob)$ and is invariant under $\tau_{e_1}$.  By symmetry in the above calculation, $F$ is also invariant under $\tau_{e_i} $, $i = 1, \ldots, d$. Therefore, by using ergodicity we have that $F = 0$.
\end{proof}

\begin{prop}\label{prop:conv_l1_corr}
  Suppose Assumption \ref{ass:environment} holds and assume that $\mean\!\big[1/\om(e)\big] < \infty$.  Then, for any $j = 1, \ldots, d$ and $\prob$-a.e.\ $\om$, 
  \begin{align}\label{eq:l1:conv}
    \lim_{n\to \infty}\, \frac{1}{n^d}\,
    \sum_{x \in B(n)} \big|\chi_j^{(n)}(\om, x)\big|
    \;=\;
    0.
  \end{align}
\end{prop}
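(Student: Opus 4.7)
The plan is a mesoscopic decomposition of the sum in \eqref{eq:l1:conv} combined with a discrete $\ell^1$-Poincar\'e inequality. Fix $\ve \in (0,1)$ and tile the cube $[-1,1]^d \supset B(1)$ by $N = O(\ve^{-d})$ disjoint cubes $C_k \subset \bbR^d$ of side length $\ve$. Let $Q_k^n \ldef n C_k \cap \bbZ^d$ denote the corresponding discrete boxes (each of side length $\sim \ve n$), and let $\bar{\chi}_k \ldef |Q_k^n|^{-1} \sum_{x \in Q_k^n} \chi_j^{(n)}(\om, x)$ be the average of the rescaled corrector on $Q_k^n$. Since $B(n) \subseteq \bigcup_k Q_k^n$ for sufficiently large $n$, the triangle inequality yields
\begin{align*}
  \frac{1}{n^d} \sum_{x \in B(n)}\! \big|\chi_j^{(n)}(\om,x)\big|
  \;\leq\; T_1(n, \ve) \,+\, T_2(n, \ve),
\end{align*}
with $T_1(n,\ve) \ldef \sum_k \big| n^{-d}\!\sum_{x \in Q_k^n} \chi_j^{(n)}(\om,x)\big|$ capturing the mean of $\chi_j^{(n)}$ over each cube and $T_2(n,\ve) \ldef n^{-d}\sum_k \sum_{x \in Q_k^n}\big|\chi_j^{(n)}(\om,x) - \bar{\chi}_k\big|$ the fluctuation around it.

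For $T_1$, Lemma~\ref{lem:cubes} applied separately to each of the finitely many cubes $C_k$ gives $n^{-d}\sum_{x \in Q_k^n}\chi_j^{(n)}(\om,x) \to 0$ $\prob$-a.s.; since $N=N(\ve)$ is finite, this yields $\lim_{n\to\infty} T_1(n,\ve) = 0$ for every fixed $\ve > 0$. For $T_2$, the discrete $\ell^1$-Poincar\'e inequality on each cube $Q_k^n$ of diameter $\sim \ve n$ (a standard estimate which I expect to find in the appendix) gives
\begin{align*}
  \sum_{x \in Q_k^n}\! \big|\chi_j^{(n)}(\om,x) - \bar{\chi}_k\big|
  \;\leq\; c\,\ve\, \sum_{\substack{x,y \in Q_k^n \\ x \sim y}}\! \big|\chi_j(\om,y) - \chi_j(\om,x)\big|,
\end{align*}
the factor $\ve n$ from the Poincar\'e constant cancelling the $n^{-1}$ hidden in $\chi_j^{(n)} = \chi_j/n$. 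Summing over $k$, the relevant edges lie (up to a bounded overlap along the cube interfaces) in a box of side $O(n)$, so Lemma~\ref{lem:ergodic_const} gives $\limsup_{n\to\infty} T_2(n,\ve) \leq c\,\ve\, \mean[\nu^{\om}(0)]^{1/2}\, \Norm{\chi_j}{L_{\mathrm{cov}}^2}$.

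Combining, $\limsup_{n\to\infty} n^{-d}\sum_{x \in B(n)}|\chi_j^{(n)}(\om,x)| \leq c\,\ve\, K$ for a $\prob$-a.s.\ finite constant $K$ independent of $\ve$; letting $\ve \downarrow 0$ along a countable sequence finishes the proof. The matching of norms is tight precisely at the $\ell^1$-level: Lemma~\ref{lem:ergodic_const} only controls the averaged $\ell^1$-norm of the gradient of $\chi_j$ (via Cauchy--Schwarz from $\Norm{\chi_j}{L_{\mathrm{cov}}^2} < \infty$ and $\mean[1/\om(e)] < \infty$), so the use of the $\ell^1$-Poincar\'e inequality (rather than its $\ell^p$-version for $p>1$) is what closes the bookkeeping. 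The only mildly delicate point is the overlap of boundary cubes with $B(n)^c$, but those contribute only a fixed multiplicative constant and are harmlessly absorbed in the $\ve\downarrow 0$ limit.
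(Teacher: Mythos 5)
Your argument is correct and is essentially the paper's own proof: the same mesoscopic partition into cubes of side proportional to $n$ times a small parameter, Lemma~\ref{lem:cubes} for the cube averages, the $\ell^1$-Poincar\'e inequality for the fluctuations, Lemma~\ref{lem:ergodic_const} for the resulting gradient sum, and then the mesoscopic scale sent to zero. The only (cosmetic) difference is that you apply the Poincar\'e inequality directly on the cubes $Q_k^n$, while the paper applies it on slightly larger balls $B(z_i, 2n/k)$ covering the cubes and then accounts for the bounded overlap; also note the inequality is cited there from \cite{DS91} rather than proved in the appendix.
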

\begin{proof}
  The following proof is based on an argument similar to the one given in \cite{MP07}.  For any $k \in \bbN$, consider a partition of the cube $[-1,1]^d$ into $k^d$ cubes $C_i = C_{i,k}$, $i = 1, \ldots k^d$, with side length $1/k$.  For $n \geq 2 k$, set $C_i(n) = C_{i,k}(n) \ldef n C_{i,k} \cap \bbZ^d$.  By construction, $B(n) = B(0,n)$ is contained in the union of the cubes $C_{i}(n)$.  Denoting by $z_i \in \bbZ^d$ the lattice point approximation of the barycenter of $C_i(n)$, we further have that $B_i(n) = B(z_i, 2 n / k ) \supset C_{i,k}(n)$.
  
  Consider a function $u\!: \bbZ^d \to \bbR$.  Then, by means of the $\ell^1$-Poincar\'{e} inequality on $\bbZ^d$ (see \cite{DS91}), we have that
  \begin{align*}
    \sum_{x \in B_i(n)}\mspace{-6mu} \big|u(x)\big|
    &\;\leq\;
    \bigg(1 + \frac{|B_i(n)|}{|C_i(n)|}\bigg)\,
    \sum_{x \in B_i(n)}\mspace{-6mu} \big|u(x) - u_{B_i(n)}\big|
    \,+\, |B_i(n)|\, \big|u_{C_i(n)}\big|
    \nonumber\\
    &\;\leq\;
    C_{\mathrm{P}}\,\frac{2n}{k}
    \bigg(1 + \frac{|B_i(n)|}{|C_i(n)|}\bigg)\,
    \sum_{\substack{x,y \in B_i(n)\\ x \sim y}}\mspace{-12mu} \big|u(x) - u(y)\big|
    \,+\, |B_i(n)|\, \big|u_{C_i(n)}\big|
  \end{align*}
  where $u_B = |B|^{-1} \sum_{x \in B} u(x)$ and $C_{\mathrm{P}} \in (0, \infty)$.  Mind that the ratio $|B_i(n)| / |C_i(n)|$ is bounded from above by a constant independent of $k$.  Moreover, any edge $\{x,y\}$ with $x,y \in B(2n)$ is contained in at most $2^d$ different balls $B_i(n)$.  Thus, by summing over $i$ we obtain
  \begin{align}\label{eq:l1:cor}
   \sum_{x \in B(n)}\mspace{-6mu} \big|u(x)\big|
   \;\leq\;
   c\, \frac{n}{k}\,
   \sum_{\substack{x,y \in B(2 n)\\ x \sim y}}\mspace{-12mu} \big|u(x) - u(y)\big|
   \,+ \sum_{1 \leq i \leq k^d}\mspace{-6mu} |B_i(n)|\, \big|u_{C_i(n)}\big|.
  \end{align}
  Let us now apply \eqref{eq:l1:cor} to $\chi_j^{(n)}(\om, x)$ with $j = 1, \ldots, d$.  Then, in view of \eqref{eq:cubes} and \eqref{eq:lim_form}, we obtain that for $\prob$-a.a.\ $\om$
  \begin{align*}
    &\limsup_{n \to \infty} \frac{1}{n^d}
    \sum_{x \in B(n)}\mspace{-6mu} \big|\chi_j^{(n)}(\om, x)\big|
    \nonumber\\
    &\mspace{36mu}\leq\;
    \frac{c}{k}\, \mean\!\big[\nu^{\om}(0)\big]^{1/2}\;
    \Norm{\chi_j}{L_\mathrm{cov}^2}
    \,+\,
    \sum_{1 \leq i \leq k^d} \frac{c}{k^d}\,
    \bigg|
      \limsup_{n \to \infty}\,
      \frac{1}{(n/k)^d} \sum_{x \in C_{i,k}(n)}\mspace{-12mu} \chi_j^{(n)}(\om, x)
    \bigg|
    \nonumber\\
    &\mspace{36mu}\leq\;
    \frac{c}{k}\, \mean\!\big[\nu^{\om}(0)\big]^{1/2}\;
    \Norm{\chi_j}{L_\mathrm{cov}^2},
  \end{align*}
  and since $k$ is arbitrary, the claim follows.
\end{proof}
\begin{remark}
  Under the assumption that $\mean\!\big[(1/\om(e))^{p/(2-p)}\big] < \infty$, one can prove by arguments similar to the ones given in the proof of Lemma~\ref{lem:cubes} that $\Norm{\chi^{(n)}}{p,B(n)} < \infty$ (cf.\ also \cite{SS04}).  Combined with the so-called sublinearity on average of the corrector, proven in Proposition 4.15 in \cite{Bi11} (cf.\ also \cite{BB07}), this also allows to deduce \eqref{eq:l1:conv}, but under stronger conditions on the inverse moments.
\end{remark}
\begin{remark}
  Note that on $\bbZ^d$ the $\ell^p$-Poincar\'{e} inequality for $p > d$ also holds.  As it was shown in \cite[Th\'{e}or\`{e}me~4.1]{Cou96} (mind the typo in the statement), the $\ell^p$-Poincar\'{e} inequality implies a version of the Gagliardo--Nirenberg inequality.  By using this inequality instead of the $\ell^1$-Poincar\'{e} inequality in the proof of Proposition~\ref{prop:conv_l1_corr}, one can show the following estimate:
  \begin{align*}
    \max_{x \in B(n)} \frac{1}{n}\, \big|\chi_j(\om, x)\big|
    \;\leq\;
    \frac{c}{k}
    \bigg( \frac{1}{n^d}\!
      \sum_{\substack{x \in B(2 n)\\ x \sim y}}\mspace{-9mu}
      \big|\chi_j(\tau_x \om, y-x)\big|^p
    \bigg)^{\!1/p}
    + \sum_{1 \leq i \leq k^d} \frac{c}{k^d}\,
    \Big|\Big(\tfrac{1}{n}\chi_j\Big)_{C_i(n)}\Big|.
  \end{align*}
  Thus, provided that $\mean\!\big[|\chi(\om, x)|^p\big] < \infty$ for $p > d$ -- which can be established in certain well mixing situations \cite{GO11} -- the sublinearity of the corrector is immediate (cf.\ \cite{BD91}).
\end{remark}
The next proposition will be proven in a more general context in Section~\ref{sec:mos_it} below.
\begin{prop} \label{lem:mos_it}
  Let $p, q \in [1, \infty)$ be such that $1/p + 1/q < 2/d$.  Then, for every $\al > 0$, there exist $\ga' > 0$ and $\ka' > 0$ and $c \equiv c(p, q, d) < \infty$ such that
  \begin{align} \label{eq:mos_it}
    \max_{x \in B(n)} \big| \chi_j^{(n)}(\om, x) \big|
    \;\leq\;
    c\,
    \Big( 1 \vee
      \Norm{\mu^{\om}}{p, B(2 n)}\, \Norm{\nu^{\om}}{q,B(2 n)}
    \Big)^{\!\ka}\,
    \Norm{\chi_j^{(n)}(\om, \cdot)}{\al, B(2 n)}^{\ga'}
  \end{align}
  for $j = 1, \ldots, d$.
\end{prop}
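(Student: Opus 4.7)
The key observation is that, since $\Phi_j = \Pi_j - \chi_j$ is $\cL^\om$-harmonic by Proposition~\ref{prop:constr_corr}, the corrector satisfies the pointwise Poisson equation $\cL^\om\chi_j(x) = \sum_{y\sim x}\om_{xy}(y_j-x_j)$, whose right-hand side is bounded in absolute value by $\mu^\om(x)$. I would therefore prove the statement as a special case of a general maximum inequality for any function $u\!:\bbZ^d\to\bbR$ satisfying $|\cL^\om u|\leq \mu^\om$ on $B(2n)$ --- this being the content of Section~\ref{sec:mos_it} --- and then specialize to $u = n^{-1}\chi_j$. Splitting $u = u_+ - u_-$, it is enough to bound $\max_{B(n)} u_+$ by a power of an averaged $\ell^\alpha$-norm on $B(2n)$, and symmetrically for $-u$.

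The argument rests on two analytic tools. First, I would derive a Caccioppoli/energy estimate by setting $w = u_+ \vee 1$ (to avoid the usual issues with nonpositive powers), choosing a cutoff function $\eta$ supported in a ball $B(\rho)\subseteq B(2n)$, and testing the Poisson equation against $\eta^2 w^\alpha$. Summation by parts, combined with the chain-rule inequalities from the Appendix and Young's inequality to absorb cross terms, yields
\begin{align*}
  &\sum_{x\sim y}\om_{xy}\bigl(\eta(x)\wedge\eta(y)\bigr)^2
    \bigl(w(y)^{(\alpha+1)/2}-w(x)^{(\alpha+1)/2}\bigr)^2 \\
  &\qquad \leq\;
    C(\alpha+1)^2 \sum_{x\sim y}\om_{xy}\bigl(\eta(x)-\eta(y)\bigr)^2 w^{\alpha+1}
    \,+\, C\sum_x \mu^\om(x)\, \eta(x)^2\, w(x)^{\alpha+1}.
\end{align*}
Second, the weighted Sobolev inequality of Proposition~\ref{prop:sob} (cf.\ Remark~\ref{rem:sob}) provides, under $1/p+1/q<2/d$, an exponent $r = (d-d/p)/(d-2+d/q) > 1$ such that $\Norm{\eta\, w^{(\alpha+1)/2}}{2r,B(\rho)}^{2}$ is controlled by $C_{\mathrm S}\,\rho^2\,\Norm{\nu^\om}{q,B(2n)}$ times the Dirichlet form appearing on the left-hand side of the Caccioppoli estimate. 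Combining the two, applying H\"older with exponent $p$ to the forcing term (which produces $\Norm{\mu^\om}{p,B(2n)}$), and choosing $\eta$ linear between concentric balls $B(\rho')\subset B(\rho)$ with $|\nabla\eta|\leq(\rho-\rho')^{-1}$, delivers the one-step gain-of-integrability
\begin{align*}
  \Norm{w}{r(\alpha+1),B(\rho')}
  \;\leq\;
  \Biggl( \frac{C(\alpha+1)^2\, \rho^2}{(\rho-\rho')^2}\,
    \Norm{\mu^\om}{p,B(2n)}\, \Norm{\nu^\om}{q,B(2n)}
  \Biggr)^{\!1/(\alpha+1)}\!
  \Norm{w}{\alpha+1,B(\rho)}.
\end{align*}

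The standard Moser iteration --- setting $\beta_0 = \alpha+1$, $\beta_{k+1}=r\beta_k$, $\rho_k = n(1+2^{-k})$, applying the one-step inequality on each scale, and letting $k\to\infty$ --- then yields
\begin{align*}
  \max_{B(n)} u_+
  \;\leq\;
  c\,\bigl(1\vee\Norm{\mu^\om}{p,B(2n)}\,\Norm{\nu^\om}{q,B(2n)}\bigr)^{\ka'}\,
  \Norm{w}{\alpha+1, B(2n)},
\end{align*}
with $\ka' = \sum_{k\geq 0}1/\beta_k = r/((r-1)(\alpha+1))$; convergence of the accumulated constants relies crucially on $r>1$ together with the exponential decay $1/\beta_k \sim r^{-k}$. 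To reach an arbitrary $\alpha > 0$ on the right-hand side, I would use the interpolation $\Norm{w}{\alpha+1}\leq\Norm{w}{\infty}^{1-\alpha/(\alpha+1)}\Norm{w}{\alpha}^{\alpha/(\alpha+1)}$ combined with an absorption of the $\Norm{w}{\infty}$-factor on the left, working on a slightly enlarged radius to create room. Finally, the elementary bounds $w \leq 1+u_+$ and $(1+t)^{1/\alpha}\leq 2(1\vee t^{1/\alpha})$ convert $\Norm{w}{\alpha,B(2n)}$ into a power of $\Norm{u_+}{\alpha,B(2n)}$, producing the exponents $\ga'$ and $\ka'$ in the statement.

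The main technical obstacle is the careful bookkeeping in this weighted degenerate iteration: one must track polynomial-in-$\alpha_k$ constants as well as the exponentially-growing cutoff factors $\rho_k^2/(\rho_k-\rho_{k+1})^2 \sim 4^k$ through infinitely many steps, verifying that their contribution via the decaying exponents $1/\beta_k \sim r^{-k}$ sums to a finite total. The appearance of exactly the combination $\Norm{\mu^\om}{p}\Norm{\nu^\om}{q}$ in the final bound is dictated by the two H\"older applications involved --- one with exponent $p$ on the forcing term coming from $\mu^\om$, and one (encoded in the weighted Sobolev inequality itself) with exponent $q$ on the $\nu^\om$-weighted control of the discrete gradient.
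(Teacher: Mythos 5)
Your overall architecture (Caccioppoli estimate via testing with $\eta^2\times$ a power of $u$, the weighted Sobolev inequality of Remark~\ref{rem:sob}, the $4^k$-bookkeeping of the iteration, and the passage to arbitrarily small $\al$ by interpolation and absorption as in Corollary~\ref{cor:MP:general}) is the same as the paper's. But there is a genuine gap at the very first step: you replace the right-hand side of the Poisson equation by the crude pointwise bound $|\cL^{\om}u|\leq\mu^{\om}$ (even with the correct normalization $u=n^{-1}\chi_j$ this only gives $|\cL^{\om}u|\leq\mu^{\om}/n$), and this destroys the scaling that makes the iteration $n$-uniform. In the Sobolev inequality \eqref{eq:sob:ineq} the factor $|B|^{2/d}\sim n^2$ must be completely cancelled by $n^{-2}$-factors sitting in front of both the Dirichlet-form bound and the forcing terms. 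With a pointwise forcing bounded by $\mu^{\om}$ (resp.\ $\mu^{\om}/n$), the term $\sum_x\mu^{\om}(x)\eta(x)^2w(x)^{\al+1}$ in your Caccioppoli estimate carries no such factor, so each iteration step produces a constant of order $n^2$ (resp.\ $n$) rather than $4^k/(\si-\si')^2$; your displayed one-step inequality silently pretends this term scales like $\rho^2/(\rho-\rho')^2$, which it does not. The accumulated constant then grows like a positive power of $n$, and since the ergodic theorem gives no rate for $\Norm{\chi_j^{(n)}}{1,B(2n)}\to 0$, such a bound is useless for Proposition~\ref{prop:sublin_corr}.

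The missing idea is to keep the right-hand side in divergence form, $\cL^{\om}u=\nabla^*V^{\om}$ with $V^{\om}(e)=\om(e)\nabla f(e)$ and $|\nabla f(e)|\leq 1/n$ (here $f(x)=x_j/n$), and to sum by parts the forcing term as well when testing with $\eta^2\tilde u^{2\al-1}$. This is exactly what Lemma~\ref{lem:moser:DF} does: the gradient $\nabla f$ then pairs with the gradient of the test function, producing forcing contributions with prefactors $\norm{\nabla f}{\infty}{E}^2\leq n^{-2}$ and $\norm{(\nabla\eta)(\nabla f)}{\infty}{E}\lesssim ((\si-\si')n\cdot n)^{-1}$, which precisely compensate $|B|^{2/d}$ and leave only the summable $4^k\al_k^2$-type factors. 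Without this step the combination $\Norm{\mu^{\om}}{p,B(2n)}\Norm{\nu^{\om}}{q,B(2n)}$ still appears, but the constant $c$ in \eqref{eq:mos_it} cannot be taken independent of $n$, and the proof of sublinearity collapses.
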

\begin{proof}
  It is obvious that $\bbZ^d$ satisfies the properties of the general graphs considered in Section \ref{sec:mos_it}.  Then the assertion for $\chi_j^{(n)}$ follows directly from Corollary~\ref{cor:MP:general} with $\si = 1$, $\si' = 1/2$ and $n$ replaced by $2n$. Note that, in view of \eqref{eq:Phi} and \eqref{eq:def:M}, the function $V_j^{\om}$ appearing in Corollary~\ref{cor:MP:general} is given $V_j^{\om}(x,y) = \frac{1}{n}\, \om_{xy}\, (y_j - x_j)$.
\end{proof}

Proposition~\ref{prop:sublin_corr} is now immediate from Proposition~\ref{lem:mos_it} with the choice $\alpha=1$, Proposition~\ref{prop:conv_l1_corr} and Lemma~\ref{lem:ergodic_const}.

\subsection{Proof of Theorem \ref{thm:main}}
In order to conclude the proof of the invariance principle, it remains to show an almost sure uniform control of the corrector, which is a direct consequence from the sublinearity of corrector established in Section \ref{sect:sublin}.
\begin{prop} \label{prop:contr_corr}
  Let $T > 0$. For $\prob$-a.e.\ $\om$,
  \begin{align}
    \sup_{0 \,\leq\, t \,\leq\, T}\,
    \frac{1}{n}\, \Big| \chi\big(\om, n\, X_{t}^{(n)}\big) \Big|
    \;\underset{n \to \infty}{\longrightarrow}\;
    0 
    \quad \text{ in $\Prob_{\!0}^\om$-probability}.
  \end{align}
\end{prop}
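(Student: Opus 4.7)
The strategy is to combine the sublinearity of the corrector (Proposition~\ref{prop:sublin_corr}) with the martingale CLT (Proposition~\ref{prop:mconv}) via a bootstrap argument that confines the walk to a macroscopic ball. Concretely, I would fix $\varepsilon, \delta > 0$ and, using that $M^{(n)}$ converges in $\Prob_{\!0}^\om$-law to $\Sigma W$ for $\prob$-a.e.\ $\om$, choose $L \geq 1$ and $N_1$ such that
\[
  \Prob_{\!0}^\om\!\Big[\sup_{0 \leq t \leq T} |M_t^{(n)}| \geq L\Big] \;<\; \varepsilon,
  \qquad n \geq N_1.
\]
By Proposition~\ref{prop:sublin_corr} applied with the constant $3L$, for $\prob$-a.e.\ $\om$ the quantity
$A_n(\om) \ldef \max_{x \in B(3 L n)} \tfrac{1}{n}|\chi(\om, x)|$
tends to $0$. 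Pick $N_2 \geq N_1$ large enough that $A_n < \min\{\delta, L\}$ for $n \geq N_2$.

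The key step is to show that on the event $E_n \ldef \{\sup_{t \leq T} |M_t^{(n)}| < L\}$, the walk stays in $B(2 L n)$ up to time $n^2 T$. Let $\tau_n \ldef \inf\{s \geq 0 : |X_s| > 2 L n\}$ and suppose $\tau_n \leq n^2 T$ on $E_n$. Because $X$ is nearest-neighbor, $|X_{\tau_n}| \leq |X_{\tau_n^-}| + 1 \leq 2 L n + 1$, so for $n \geq N_2$ (and thus $2Ln+1 \leq 3Ln$) the point $X_{\tau_n}$ lies in $B(3 L n)$. The martingale decomposition \eqref{eq:def:M} then gives
\[
  \frac{|X_{\tau_n}|}{n}
  \;\leq\;
  \frac{|M_{\tau_n}|}{n} + \frac{|\chi(\om, X_{\tau_n})|}{n}
  \;<\; L + A_n
  \;<\; 2 L,
\]
contradicting $|X_{\tau_n}| > 2 L n$. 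Hence $\tau_n > n^2 T$ on $E_n$, and for every $t \in [0,T]$ the point $n X_t^{(n)} = X_{n^2 t}$ lies in $B(2 L n) \subseteq B(3 L n)$, so that $\tfrac{1}{n}|\chi(\om, n X_t^{(n)})| \leq A_n < \delta$. Combining gives
\[
  \Prob_{\!0}^\om\!\Big[\sup_{0 \leq t \leq T} \tfrac{1}{n}\big|\chi(\om, n X_t^{(n)})\big| \geq \delta\Big]
  \;\leq\;
  \Prob_{\!0}^\om[E_n^c]
  \;<\; \varepsilon,
\]
for all $n \geq N_2$, which is the claim.

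The only genuine obstacle is the bootstrap: a priori, sublinearity of $\chi$ is only useful inside a ball whose size we do not yet control, while control of $X$ requires control of $\chi$. This is resolved by the nearest-neighbor exit argument above, which bounds $|X_{\tau_n}|$ by $2Ln+1$ using only the jump structure, and then closes the loop via $|X| \leq |M| + |\chi|$.
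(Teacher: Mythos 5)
Your argument is correct and is essentially the paper's own proof: both rest on the decomposition \eqref{eq:def:M}, the sublinearity of the corrector on a macroscopic ball (Proposition~\ref{prop:sublin_corr}), and tightness of $\sup_{0\le t\le T}|M^{(n)}_t|$ coming from the martingale invariance principle of Proposition~\ref{prop:mconv}, combined through an exit-time bootstrap; the paper runs the bootstrap by bounding $\Prob_{\!0}^{\om}\big[\sup_{t\le T}|X^{(n)}_t|>L\big]$ by $\Prob_{\!0}^{\om}\big[\sup_{t\le T}|M^{(n)}_t|\ge L-1\big]$ and controlling the latter via Doob's inequality before letting $L\to\infty$, whereas you phrase it as a contradiction at the first exit from $B(2Ln)$ with direct $\varepsilon$--$\delta$ bookkeeping. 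The only detail to keep consistent is the choice of norm ($|X_t|$ versus graph-distance balls $B(r)$), which costs at most a dimension-dependent factor in the radius and is harmless since Proposition~\ref{prop:sublin_corr} holds for every $L\ge 1$.
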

\begin{proof}
  We proceed as in \cite{FK97,FK99}.  Recall that $\chi^{(n)}(\om, x) \ldef \frac{1}{n}\, \chi(\om, x)$.  Fix $T > 0$, $L > 1$ and denote by $T_{L,n}$ the exit time of $X^{(n)}$ from the cube $C=[-L,L]^d$.  By Proposition~\ref{prop:sublin_corr}, we have
  \begin{align*}
    \lim_{n \to \infty}\;
    \sup_{0 \,\leq\, t \,<\, T_{L,n}}\;
    \Big| \chi^{(n)}\big(\om, n\, X_{t}^{(n)}\big) \Big|
    \;=\;
    0,
    \qquad \prob\text{-a.s.}
  \end{align*}
  Hence, we can choose $n_0 \in \bbN$ such that $\sup_{0 \,\leq\, t \,<\, T_{L,n}} \big| \chi^{(n)}\big(\om, n\, X_{t}^{(n)}\big) \big| < 1$ for all $n \geq n_0$.  Then, for such $n$ we have
  \begin{align*}
    \Prob_{\!0}^{\om}\!%
    \bigg[ \sup_{0 \,\leq\, t \,\leq\, T} \big| X^{(n)}_t\big| > L \bigg]
    &\;=\;
    \Prob_{\!0}^{\om}\!%
    \bigg[
      T_{L,n} \leq T,
      \sup_{0 \,\leq\, t \,<\, T_{L,n}} \!\!\big| M^{(n)}_t\big| > L-1
    \bigg]
    \nonumber\\[.5ex]
    &\;\leq\;
    \Prob_{\!0}^{\om}\!%
    \bigg[ \sup_{0 \,\leq\, t \,\leq\, T} \big| M^{(n)}_t\big| \geq L-1 \bigg].
  \end{align*}
  Since $M^{(n)}$ converges weakly to a Brownian motion, we have by Doob's maximal inequality that there exists $c < \infty$ such that 
  \begin{align*}
    \limsup_{n \to \infty}\; \Prob_{\!0}^{\om}\!%
    \bigg[ \sup_{0 \,\leq\, t \,\leq\, T} \big| M^{(n)}_t\big| \geq L-1 \bigg]
    \;\leq\;
    \frac{c}{L-1}.
  \end{align*}
  Thus, for any $\de > 0$, we have
  \begin{align}
    &\limsup_{n\to \infty}\,
    \Prob_{\!0}^{\om}\!%
    \bigg[
      \sup_{0 \,\leq\, t \,\leq\, T}\,
      \Big| \chi^{(n)}\big(\om, n\, X_{t}^{(n)}\big) \Big| \geq \de
    \bigg]
    \nonumber\\[.5ex]
    &\mspace{36mu}\leq\;
    \limsup_{n \to \infty}
    \Bigg(
      \Prob_{\!0}^{\om}\!%
      \bigg[
        \sup_{0 \,\leq\, t \,<\, T_{L,n}}
        \Big| \chi^{(n)}\big(\om, n\, X_{t}^{(n)}\big) \Big|
        \geq \de
      \bigg]
      \,+\,
      \Prob_{\!0}^{\om}\!
      \bigg[ \sup_{0 \,\leq\, t \,\leq\, T} \big| X^{(n)}_t \big| \geq L \bigg]\,
    \Bigg)
    \nonumber\\[.5ex]
    &\mspace{36mu}\leq\;
    \frac{c}{L-1}.
  \end{align}
  Since $L>1$ is arbitrary, the claim follows.
\end{proof}
Theorem \ref{thm:main} now follows from Proposition \ref{prop:mconv} and Proposition \ref{prop:contr_corr}.

\section{Moser iteration on general weighted graphs}
\label{sec:mos_it}
Let us consider an infinite, connected, locally finite graph $G = (V, E)$ with vertex set $V$ and edge set $E$.  We will write $x \sim y$ if $\{x,y\} \in E$.  A path of length $n$ between $x$ and $y$ in $G$ is a sequence $\{x_i : i = 0, \ldots , n\}$ with the property that $x_0 = x$, $x_n = y$ and $x_i \sim x_{i+1}$.  Let $d$ be the natural graph distance on $G$, that is, $d(x,y)$ is the minimal length of a path between $x$ and $y$.  We denote by $B(x,r)$ the closed ball with center $x$ and radius $r$, that is, $B(x,r) \ldef \{y \in V \mid d(x,y) \leq r\}$.

The graph is endowed with the counting measure, that is, the measure of $A \subset V$ is simply the number $|A|$ of elements in $A$.  For functions $f\!:A \to \bbR$, where either $A \subseteq V$ or $A \subseteq E$, the $\ell^p$-norm $\norm{f}{p}{A}$ will be taken with respect to the counting measure.  The corresponding scalar products in $\ell^2(V)$ and $\ell^2(E)$ are denoted by $\scpr{\cdot}{\cdot}{V}$ and $\scpr{\cdot}{\cdot}{E}$, respectively.

For a given set $B \subset V$, we define the \emph{relative} internal boundary of $A \subset B$ by
\begin{align*}
  \partial_B A
  \;\ldef\;
  \big\{
    x \in A \;\big|\;
    \exists\, y \in B \setminus A\; \text{ s.th. }\; \{x,y\} \in E
  \big\}
\end{align*}
and we simply write $\partial A$ instead of $\partial_V A$.
\begin{assumption}\label{ass:graph}
 For some $d\geq 2$, the graph $G$ satisfies the following conditions:
  \begin{enumerate}[(i)]
    \item volume regularity of order $d$, that is, there exists $C_{\mathrm{reg}} \in(0, \infty)$ such that
      \begin{align}\label{eq:ass:vd}
       C^{-1}_{\mathrm{reg}}\, r^d  \;\leq\; |B(x,r)| \;\leq\; C_{\mathrm{reg}}\, r^d
        \qquad \forall\, x \in V,\; r \geq 1. 
      \end{align}
    \item relative isoperimetric inequality, that is, there exists $C_{\mathrm{riso}} \in (0, \infty)$ such that for all $x \in V$ and $r \geq 1$
      \begin{align}\label{eq:ass:riso}
        \frac{|\partial_{B(x,r)} A|}{|A|}
        \;\geq\;
        \frac{C_{\mathrm{riso}}}{r}
        \qquad
        \forall\; A \subset B(x,r)\; \text{ s.th. } |A| < \tfrac{1}{2} |B(x,r)|.
      \end{align}
  \end{enumerate}
\end{assumption}
\begin{remark}
  The Euclidean lattice, $(\bbZ^d, E_d)$, satisfies Assumption \ref{ass:graph}.
\end{remark}
\begin{remark} \label{rem:S_1}
  The following Sobolev inequality $(S_d^1)$ holds, that is,
  \begin{align}\label{eq:sob}
    \bigg(
      \sum\nolimits_{x \in V} |u(x)|^{d/(d-1)}
    \bigg)^{\!\!(d-1)/d}
    \;\leq\;
    C_{\mathrm{S_1}}
    \sum_{\{x,y\} \in E} \big| u(x) - u(y) \big|
  \end{align}
  for all functions $u\!: V \to \bbR$ with finite support.

  This can be seen as follows.  First, suppose that the graph $(V,E)$ satisfies condition (ii) in Assumption~\ref{ass:graph}.  Then, by means of a discrete version of the co-area formula, the classical $\ell^1$-Poincar\'e inequality can be easily established; see, for example, \cite[Lemma~3.3.3]{SC97}.  Second, provided that the counting measure also satisfies the doubling property and balls in $V$ have a regular volume growth, which are both ensured by the condition (i) in Assumption \ref{ass:graph}, the $\ell^1$-Poincar\'e inequality implies the usual isoperimetric inequality; see, for example, \cite[Proposition.~2.9]{Cou03}.  But, the latter is equivalent to the Sobolev inequality $(S_d^1)$, \cite[Proposition.~2.3]{Cou03}. 
\end{remark}
\begin{remark}
  It is well known \cite{BM03, MR04} that on random graphs, for example, on supercritical percolation clusters, the inequality \eqref{eq:ass:riso} holds only on large sets.  However, one can expect that for some $\th \in (0,1)$ and $x_0 \in V$ the following \emph{relative $\th$-isoperimetric inequality} holds:  There exists $R_0 = R_0(x_0)< \infty$ such that for all $R \geq R_0$ \eqref{eq:ass:riso} holds for every $B(x,r) \subset B(x_0,R)$ with $r > R^{\th}$.  Nevertheless, as it was communicated to us by M. Barlow, for functions $u$ with $\supp u \subset B(x_0,R)$ one can establish a Sobolev-type ineqality \eqref{eq:sob} provided that $d$ is replaced by the $d' = d/\ga$ with $\ga \in [0,1-\th)$ and $C_{\mathrm{S_1}}$ is replaced by $C_{\mathrm{S_1}} R^{1-\ga}$.  Let us stress that since $\supp u \subset B(x_0,R)$, we obtain even in this situation that 
  \begin{align*}
    \Bigg(
      \frac{1}{|B(x_0,R)|}
      \sum_{x \in B(x_0,R)}\mspace{-14mu} |u(x)|^{d' / (d'-1)}
    \Bigg)^{\!\!(d'-1)/d'}
    \mspace{-6mu}\leq\;
    C_{\mathrm{S_1}}\, \frac{R}{|B(x_0,R)|}\,
    \sum_{\substack{x,y \,\in\, B(x_0,R)\\\{x,y\} \in E}}\mspace{-20mu}
    \big| u(x) - u(y) \big|.
  \end{align*}
 In the particular case of supercritical percolation clusters on $\mathbb{Z}^d$, a stronger isoperimetric inequality has been proven in \cite{MR04}. More precisely, on the intersection of the cluster and a box with side length $n$ an isoperimetric inequality with dimension $d'(n)>d$ is obtained,  from which also such a type of Sobolev inequality follows.  
\end{remark}
Assume that the graph, $G$, is endowed with positive weights, that is, we consider a family $\om = \{\om(e) : e \in E\} \,\in\, (0, \infty)^E$.  To lighten notation, we set
$  \om(x,y) =\om(y,x) \ldef\om(\{x,y\})$ for all $ \{x,y\} \in E$ and $ \om(x,y) \ldef 0$ for all $ \{x,y\} \not\in E$.
Let us further define measures $\mu^{\om}$ and $\nu^{\om}$ on $V$ by
\begin{align*}
  \mu^{\om}(x) \;\ldef\; \sum_{x \sim y}\, \om(x,y)
  \qquad \text{and} \qquad
  \nu^{\om}(x) \;\ldef\; \sum_{x \sim y}\, \frac{1}{\om(x,y)}.
\end{align*}
For each non-oriented edge $e \in E$, we specify out of its two endpoints one as its initial vertex $e^-$ and the other one as its terminal vertex $e^+$. Nothing of what will follow depend on the particular choice.  Given a weighted graph $(V, E, \om)$, we define the \emph{discrete Laplacian}, $\cL^{\om}$, acting on bounded functions $f\!: V \to \bbR$ by
\begin{align*}
  \big(\cL^{\om} f\big)(x)
  \;\ldef\;
  \sum_{x \sim y}\, \om(x,y)\, \big(f(y) - f(x)\big)
  \;=\;
  - \nabla^*(\om \nabla f) (x),
\end{align*}
where the operators $\nabla$ and $\nabla^*$ are defined by $\nabla f\!: E \to \bbR$ and $\nabla^*F\!: V \to \bbR$
\begin{align*}
  \nabla f(e) \;\ldef\; f(e^+) - f(e^-),
  \qquad \text{and} \qquad
  \nabla^*F (x) \;\ldef\; \sum_{e: e^+ =\, x}\! F(e) \,-\! \sum_{e: e^- =\, x}\! F(e)
\end{align*}
for $f\!: V \to \bbR$ and $F\!: E \to \bbR$.  Mind that $\nabla^*$ is the adjoint of $\nabla$, that is, for all $f \in \ell^2(V)$ and $F \in \ell^2(E)$, it holds $\scpr{\nabla f}{F}{E} = \scpr{f}{\nabla^* F}{V}$.  We define the products $f \cdot F$ and $F \cdot f$ between a function, $f$, defined on the vertex set and a function, $F$, defined on the edge set in the following way:
\begin{align*}
  \big(f \cdot F\big)(e) \;\ldef\; f(e^-)\, F(e),
  \qquad \text{and} \qquad
  \big(F \cdot f\big)(e) \;\ldef\; f(e^+)\, F(e).
\end{align*}
Then the discrete analog of the product rule can be written as
\begin{align}\label{eq:rule:prod}
  \nabla(f g)
  \;=\;
  \big(g \cdot \nabla f\big) \,+\, \big(\nabla g \cdot f\big).
\end{align}
In contrast to the continuum setting, a discrete version of the chain rule cannot be established.  However, by means of the estimate \eqref{eq:A1:chain:ub1}, $|\nabla f^{\al}|$ for $f \geq 0$ can be bounded from above by
\begin{align}
  \label{eq:rule:chain1}
  \frac{1}{1 \vee |\al|}\,\big|\nabla f^{\al}\big|
  &\;\leq\;
  \big|f^{\al-1} \cdot \nabla f\big| \,+\, \big|\nabla f \cdot f^{\al-1}\big|,
  \qquad \forall\, \al \in \bbR.
  \intertext{On the other hand, the estimate \eqref{eq:A1:chain:lo} implies the following lower bound}
  \label{eq:rule:chain2}
  2\,\big|\nabla f^{\al}\big|
  &\;\geq\;
  \big|f^{\al-1} \cdot \nabla f\big| \,+\, \big|\nabla f \cdot f^{\al-1}\big|,
  \qquad \forall\, \al \geq 1.
\end{align}

The \emph{Dirichlet form} or \emph{energy} associated to $\cL^{\om}$ is defined by
\begin{align} \label{eq:ibp}
  \cE^{\om}(f,g)
  \;\ldef\;
  \scpr{f}{-\cL^{\om}g}{V}
  \;=\;
  \scpr{\nabla f}{\om \nabla g}{E},
  \qquad
  \cE^{\om}(f) \;\equiv\; \cE^{\om}(f,f).
\end{align}
For a given function $\eta\!: B \subset V \to \bbR$, we denote by $\cE_{\eta^2}^{\om}(u)$ the Dirichlet form where $\om(e)$ is replaced by $\frac{1}{2}(\eta^2(e^-) + \eta^2(e^+)) \om(e)$ for $e \in E$.  Mind that
\begin{align*}
  \cE_{\eta^2}^{\om}(u)
  \;=\;
  \scpr{\nabla u}{(\eta^2 \cdot \om)\, \nabla u}{E}.
\end{align*}

Finally, for any nonempty, finite $A \subset V$ and $p \in [1, \infty)$, we introduce space-averaged $\ell^{p}$-norms on functions $f\!: A \to \bbR$ by the usual formula
\begin{align*}
  \Norm{f}{p,A}
  \;\ldef\;
  \Bigg(
    \frac{1}{|A|}\; \sum_{x \in A}\, |f(x)|^p
  \Bigg)^{\!\! 1/p}
  \quad \text{and} \quad
  \Norm{f}{p, A, \mu^{\om}}
  \;\ldef\;
  \Bigg(
    \frac{1}{|A|}\; \sum_{x \in A}\,\mu^{\om}(x)\; |f(x)|^p 
  \Bigg)^{\!\! 1/p}\mspace{-12mu}.
\end{align*}

\subsection{Sobolev inequality}
The main objective in this subsection is to establish a weighted version of Sobolev inequality $(S_d^2)$ which is the key that allows us to use Moser's iteration technique.  Starting point for our further considerations is the Sobolev inequality $(S_d^1)$ on the unweighted graph $(V,E)$ in Remark~\ref{rem:S_1} 
\begin{align}\label{eq:sob:S1}
  \norm{u}{d/(d-1)}{V}
  \;\leq\;
  C_{\mathrm{S_1}}\, \norm{\nabla u}{1}{E}
\end{align}
for any function $u$ on $V$ with finite support.  Our task is to establish a corresponding version on a weighted graph.  For this purpose, define for $q \geq 1$
\begin{align}\label{eq:def:rho}
  \rho \;=\; \rho(d,q) \;\ldef\; \frac{d}{(d-2) + d/q}.
\end{align}
Notice that $\rho(d,q)$ is monotone increasing in $q$ and converges as $q$ tends to infinity to $d/(d-2)$. Moreover, $\rho(d, d/2) = 1$.
\begin{prop}[Sobolev inequality] \label{prop:sob}
  Suppose that the graph, $G = (V,E)$, satisfies Assumption~\ref{ass:graph} and let $B \subset V$ be finite and connected.  Consider a non-negative function $\eta$ with
  \begin{align*}
    \supp \eta \;\subset\; B, \qquad
    0 \;\leq\; \eta \;\leq\; 1 \qquad \text{and} \qquad
    \eta \equiv 0 \quad \text{on} \quad \partial B.
  \end{align*}
  Then, for any $q \in [1, \infty)$, there exists $C_{\mathrm{\,S}} \equiv C_{\mathrm{\,S}}(d,q) < \infty$ such that for any $u:V \to \bbR$,
  \begin{align}\label{eq:sob:ineq}
    \Norm{(\eta\, u)^2}{\rho,B}
    \;\leq\;
    C_{\mathrm{\,S}}\, |B|^{\frac{2}{d}}\, \Norm{\nu^{\om}}{q,B}\;
    \bigg(
      \frac{\cE_{\eta^2}^{\om}(u)}{|B|} \,+\,
      \norm{\nabla \eta}{\infty\!}{E}^2\,
      \Norm{u^2}{1,B, \mu^{\om}}
    \bigg).
  \end{align}
  If $d \geq 3$, \eqref{eq:sob:ineq} also holds for $q = \infty$.
\end{prop}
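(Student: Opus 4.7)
The idea is to reduce \eqref{eq:sob:ineq} to the unweighted $\ell^1$-Sobolev inequality $(S_d^1)$ of Remark~\ref{rem:S_1} by applying it to a suitable power of $|\eta u|$ and then inserting the weights $\om$ and $1/\om$ via a three-factor H\"older estimate. Since only $(\eta u)^2$ and $|\eta u|$ appear in the claimed inequality, I may assume $u \geq 0$ (or work with $|u|$). I choose
\[
  \be \;\ldef\; \frac{q(d-1)}{q(d-2)+d},
\]
so that $2\be \cdot d/(d-1) = 2\rh$. For $q \geq 1$ one checks $\be \geq 1/2$, so $2\be \geq 1$, which is precisely the regime in which the discrete chain rule \eqref{eq:rule:chain1} gives a clean pointwise bound. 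Applying $(S_d^1)$ to $v = (\eta u)^{2\be}$, whose support lies in $B$, yields
\[
  \Big(\sum_{x \in B} (\eta u)^{2\rh}(x)\Big)^{(d-1)/d}
  \;\leq\;
  C_{\mathrm{S_1}}\!\sum_{e \in E(B)}\! |\nabla(\eta u)^{2\be}(e)|,
\]
where $E(B) \ldef \{e \in E : e^\pm \in B\}$: only such edges contribute because $v$ vanishes outside the interior of $B$ (as $\eta \equiv 0$ on $\partial B$ and outside $B$).

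Next, by \eqref{eq:rule:chain1} with $f = \eta u$ and $\al = 2\be \geq 1$,
\[
  |\nabla(\eta u)^{2\be}(e)|
  \;\leq\;
  2\be\,\big((\eta u)^{2\be-1}(e^+) + (\eta u)^{2\be-1}(e^-)\big)\,|\nabla(\eta u)(e)|.
\]
I apply H\"older's inequality on $E(B)$ with exponents $\bigl(2q/(q-1),\,2,\,2q\bigr)$ after inserting the trivial factor $\om(e)^{1/2}\om(e)^{-1/2}$. The very choice of $\be$ ensures $(2\be-1)\cdot 2q/(q-1) = 2\rh$, so (using the bounded degree guaranteed by volume regularity to pass from edges to vertices) the $\ell^{2q/(q-1)}$-piece is $c\,\big(\sum_x (\eta u)^{2\rh}\big)^{(q-1)/(2q)}$. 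This is the absorbable factor, since a direct computation shows $(d-1)/d - (q-1)/(2q) = 1/(2\rh)$.

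For the middle $\ell^2(\om)$-factor I use the product rule $\nabla(\eta u)(e) = \bar\eta(e)\,\nabla u(e) + \bar u(e)\,\nabla\eta(e)$, with $\bar f(e) \ldef \tfrac{1}{2}(f(e^+)+f(e^-))$, the convexity estimate $\bar\eta(e)^2 \leq \tfrac{1}{2}(\eta^2(e^+)+\eta^2(e^-))$, and the crucial observation that $\nabla\eta(e) \neq 0$ forces both endpoints of $e$ into $B$. This gives
\[
  \sum_{e \in E(B)} \om(e)\,|\nabla(\eta u)(e)|^2
  \;\leq\;
  2\,\cE_{\eta^2}^{\om}(u)
  \,+\, 2|B|\,\norm{\nabla\eta}{\infty\!}{E}^2\,\Norm{u^2}{1,B,\mu^\om}.
\]
For the outer $\ell^{2q}$-factor I use the elementary inequality $\sum_{y \sim x} \om(x,y)^{-q} \leq \nu^\om(x)^q$, valid for all $q \geq 1$, to get $\sum_{e \in E(B)} \om(e)^{-q} \leq \tfrac{1}{2}|B|\,\Norm{\nu^\om}{q,B}^q$.

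Combining these bounds, absorbing the $(\sum_x (\eta u)^{2\rh})^{(q-1)/(2q)}$-factor into the left-hand side, then squaring and dividing by $|B|^{1/\rh}$, leads to
\[
  \Norm{(\eta u)^2}{\rh,B}
  \;\leq\;
  c\,|B|^{1/q-1/\rh}\,\Norm{\nu^\om}{q,B}\,
  \Big(\cE_{\eta^2}^{\om}(u) + |B|\,\norm{\nabla\eta}{\infty\!}{E}^2\,\Norm{u^2}{1,B,\mu^\om}\Big).
\]
The exponent identity $1/q - 1/\rh = -(d-2)/d = 2/d - 1$ then redistributes the $|B|$-factors to give exactly \eqref{eq:sob:ineq}. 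The borderline case $q = \infty$ (meaningful only for $d \geq 3$, where $\rh = d/(d-2) < \infty$) is handled in the same way, with H\"older reduced to two factors and $\sup_{e \in E(B)} \om(e)^{-1} \leq \Norm{\nu^\om}{\infty,B}$ playing the role of the $\ell^q$-bound. The only real subtlety is identifying $\be$ so that the Moser-type absorption closes; once this is done, the rest is exponent bookkeeping.
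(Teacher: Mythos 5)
Your argument is correct and follows essentially the same route as the paper's proof: apply the unweighted $(S_d^1)$ inequality to $(\eta u)^{2\be}$ (your $2\be$ equals the paper's exponent $\al = 2\rho(d-1)/d$), use the discrete chain-rule bound \eqref{eq:A1:chain:ub1}, insert the weights $\om^{1/2}\om^{-1/2}$, and your single three-factor H\"older with exponents $(2q_*,2,2q)$ is exactly the paper's Cauchy--Schwarz followed by H\"older in $(q,q_*)$, with the same absorption of the $2\rho$-norm and the same exponent bookkeeping. The only cosmetic difference is that the paper first reduces to a cutoff-free inequality for $v=\eta u$ via $\cE^{\om}(\eta u)\leq 2\,\cE^{\om}_{\eta^2}(u)+2\norm{\nabla\eta}{\infty\!}{E}^2\,\norm{u^2\mu^{\om}}{1}{B}$, whereas you perform the symmetrized product-rule split inside the middle H\"older factor.
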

\begin{remark} \label{rem:sob}
  By H\"older's inequality with $1/p + 1/p_* = 1$ and $p \in (1, \infty]$, we have
  \begin{align*}
    \Norm{(\eta\, u)^2}{\rho / p_*, B, \mu^{\om}}
    \;\leq\;
    \Norm{\mu^{\om}}{p,B}^{p_* / \rho}\;
    \Norm{(\eta\, u)^2}{\rho, B, \mu^{\om}}.
  \end{align*}
  Thus, in view of \eqref{eq:sob:ineq}, we obtain that
  \begin{align}
    \Norm{(\eta\, u)^2}{r, B, \mu^{\om}}
    \;\leq\;
    C_{\mathrm{S}}\, |B|^{2/d}\,
    \Norm{\nu^{\om}}{q,B}\,
    \Norm{\mu^{\om}}{p,B}^{1/r}\;
    \bigg(
      \frac{\cE_{\eta^2}^{\om}(u)}{|B|} \,+\,
      \norm{\nabla \eta}{\infty\!}{E}^2\,
      \Norm{u^2}{1, B, \mu^{\om}}
    \bigg),
  \end{align}
  where $r \equiv r(d,p,q) \ldef \rho(d,q) / p_* = \big(d - d/p\big)/\big(d-2 + d/q\big)$.
\end{remark}
\begin{proof}
  First of all notice that $\nabla (\eta u) = \eta \cdot \nabla u + \nabla \eta \cdot u$ due to \eqref{eq:rule:prod}.  This implies that
  \begin{align} \label{eq:sob:prod}
    \cE^{\om}(\eta\, u)
    \;\leq\;
    2\, \cE_{\eta^2}^{\om}(u)
    \,+\,
    2\,
    \norm{\nabla \eta}{\infty\!}{E}^2\,
    \norm{u^2 \mu^{\om}}{1}{B}.
  \end{align}
  Hence, it suffices to prove that for any function $v\!: V \to \bbR$ with $\supp v \subset B$
   \begin{align}\label{eq:sob:ineq:noeta}
    \Norm{v^2}{\rho,B}
    \;\leq\;
    \frac{C_{\mathrm{\,S}}}{2}\, |B|^{2/d}\, \big\|\nu^{\om}\big\|_{q,B}\;
    \frac{\cE^{\om}(v)}{|B|}.
  \end{align}
  But, an application of \eqref{eq:sob:S1} to the function $|v|^{\al}$ with $\al = 2 \rho(q,d) (d-1)/d$ yields
  \begin{align}\label{eq:sob:d1:noeta}
    \norm{|v|^{\al}}{d/(d-1)}{V}
    &\;\leq\;
    C_{\mathrm{S_1}}\, \norm{\nabla |v|^{\al}}{1}{E}
    \nonumber\\
    &\;\leq\;
    2\, C_{\mathrm{S_1}}\, \max\{1,\al\}\, \norm{|v|^{\al-1} \cdot \nabla |v|}{1}{E},
  \end{align}
  where we used \eqref{eq:rule:chain1} in the last step.  By using the Cauchy--Schwarz inequality, we find
  \begin{align}\label{eq:sob:term1:noeta}
    \norm{|v|^{\al-1} \cdot \nabla |v|}{1}{E}
    \;\leq\;
    |B|^{1/2}\, \cE^{\om}(v)^{1/2}\, \Norm{|v|^{2(\al-1)}\,
      \nu^{\om}}{1,B}^{1/2},
  \end{align}
  where we used that $\cE^{\om}(|v|) \leq \cE^{\om}(v)$.  If $q = 1$, then $\al = 1$ and \eqref{eq:sob:ineq:noeta} follows immediately from \eqref{eq:sob:d1:noeta} and \eqref{eq:sob:term1:noeta} after normalizing the norms.  In the case $q > 1$, H\"{o}lder's inequality with $1/q + 1/q_* = 1$ and $q \in (1, \infty]$ yields
  \begin{align*}
    \Norm{|v|^{2(\al-1)}\, \nu^{\om}}{1,B}^{1/2}
    \;\leq\;
    \Norm{|v|^{2(\al-1)}}{q_*,B}^{1/2}\; \Norm{\nu^{\om}}{q,B}^{1/2}.
  \end{align*}
  But, due to the definition of $\rho(q,d)$, we have that $\al d/(d-1) = 2 q_* (\al-1) = 2 \rho$.  Thus, combining the last estimate with \eqref{eq:sob:d1:noeta} and \eqref{eq:sob:term1:noeta} and solving for $\Norm{v^2}{\rho,B}$, \eqref{eq:sob:ineq:noeta} is immediate.  
\end{proof}

\subsection{Maximum inequality for Poisson equations}
 In this section, our main objective is to establish a maximum inequality for the solution of a particular Poisson equation where the right-hand side is in divergence form.  More precisely, denote by $u$ the solution of 
\begin{align}\label{eq:poisson_eq}
  \cL^{\om} u \;=\; \nabla^* V^{\om}   ,
  \qquad \text{on} \quad B \subset V \text{ finite}
\end{align}
where $V^{\om}\!:E \to \bbR$ is given by
\begin{align}\label{eq:local:drift}
  V^{\om}(e) \ldef \om(e)\, \nabla f(e)
\end{align}
for some function $f\!:V \to \bbR$.
\begin{theorem} \label{thm:MP}
  For any $x_0 \in V$ and $n \geq 1$ let $B(n) \equiv B(x_0,n)$.  Suppose that $\cL^{\om} u = \nabla^* V^{\om}$ on $B(n)$.  Assume that the function $f$ in \eqref{eq:local:drift} satisfies $|\nabla f(e)| \leq 1/n$ for all $e \in E$.  Then, for any $p,q \in (1, \infty]$ with
  \begin{align}\label{cond:pq}
    \frac{1}{p} + \frac{1}{q} \;<\; \frac{2}{d} 
  \end{align}
  there exists $\ga \in (0,1]$, $\ka \equiv \ka(d,p,q) \in (1,\infty)$ and $C_1 \equiv C_1(d) < \infty$ such that for all $1/2 \leq \si' < \si \leq 1$
  \begin{align}\label{eq:MP}
    \max_{x \in B(\si' n)} |u(x)|
    \;\leq\;
    C_1\,
    \Bigg(
      \frac{1\vee \Norm{\mu^{\om}}{p,B(n)}\, \Norm{\nu^\om}{q,B(n)}}{(\si - \si')^2}
    \Bigg)^{\!\!\ka}\, \Norm{u}{2 \rho, B(\si n)}^{\ga},
  \end{align}
  where $\rho = \rho(q,d)$ is given by \eqref{eq:def:rho}.
\end{theorem}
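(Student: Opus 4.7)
The proof is a Moser iteration adapted to the weighted discrete setting. The three building blocks are: a Caccioppoli-type energy estimate; a one-step $L^{2\alpha}\!\to\!L^{2\alpha r}$ improvement coming from the weighted Sobolev inequality of Remark~\ref{rem:sob}; and an iteration along an increasing sequence of exponents on a shrinking sequence of balls, exploiting that $r = r(d,p,q) > 1$ is equivalent to the assumption $1/p + 1/q < 2/d$.

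\emph{Step 1 (Caccioppoli estimate).} Fix $\al \geq 1$ and, for $1/2 \leq \si_* < \si_{**} \leq 1$, pick a cutoff $\eta\!:V\to[0,1]$ with $\eta \equiv 1$ on $B(\si_* n)$, $\supp \eta \subset B(\si_{**} n)$, $\eta \equiv 0$ on $\partial B(\si_{**} n)$ and $\norm{\nabla \eta}{\infty\!}{E} \leq 1/((\si_{**}-\si_*)n)$. Test the equation $\cL^\om u = \nabla^* V^\om$ against $\psi = u\, |u|^{2\al-2}\, \eta^2$. Using the integration-by-parts identity \eqref{eq:ibp}, the product rule \eqref{eq:rule:prod}, and the discrete chain-rule bounds \eqref{eq:rule:chain1}--\eqref{eq:rule:chain2}, the left-hand side controls a multiple of $\cE^\om_{\eta^2}(|u|^\al)$ plus cross terms; the right-hand side is bounded via Cauchy--Schwarz using $|\nabla f| \leq 1/n$. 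Absorbing half of the gradient contribution into the left-hand side by Young's inequality yields
\begin{align*}
  \cE^\om_{\eta^2}(|u|^\al)
  \;\leq\;
  C\, \al^2\, \Big( \norm{\nabla \eta}{\infty\!}{E}^2 + n^{-2} \Big)\;
  \Norm{|u|^{2\al}}{1, B(\si_{**} n), \mu^\om}\, |B(\si_{**} n)|.
\end{align*}
Signed issues in $|u|^\al$ are handled in the standard way (splitting into $u_+$ and $u_-$, each of which satisfies an inequality of the same type).

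\emph{Step 2 (One-step improvement).} Apply the weighted Sobolev inequality of Remark~\ref{rem:sob}, with the exponent $r = r(d,p,q) = (d-d/p)/(d-2+d/q) > 1$, to $v = |u|^\al$ and the same cutoff $\eta$. Combining with the energy estimate from Step~1 and using that $|B(\si n)| \asymp n^d$ by \eqref{eq:ass:vd}, I obtain, after taking $2\al$-th roots,
\begin{align*}
  \Norm{u}{2\al r, B(\si_* n), \mu^\om}
  \;\leq\;
  \Bigg(
    \frac{C\, \al^2\, \Phi}{(\si_{**} - \si_*)^2}
  \Bigg)^{\!\!1/(2\al)}\!
  \Norm{u}{2\al, B(\si_{**} n), \mu^\om},
\end{align*}
where $\Phi \ldef 1 \vee \Norm{\mu^\om}{p, B(n)}\, \Norm{\nu^\om}{q, B(n)}$.

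\emph{Step 3 (Iteration and conclusion).} Set $\al_k = \rho\, r^k$ and $\si_k = \si' + (\si - \si')\, 2^{-k}$, so that $\si_k - \si_{k+1} \asymp (\si - \si')\, 2^{-k}$. Iterating the one-step estimate of Step~2 between the pairs $(\si_{k+1}, \si_k)$ gives
\begin{align*}
  \Norm{u}{2\al_{k+1}, B(\si_{k+1} n), \mu^\om}
  \;\leq\;
  \prod_{j=0}^{k}
    \Bigg( \frac{C\, \al_j^2\, 4^j\, \Phi}{(\si-\si')^2} \Bigg)^{\!\!1/(2\al_j)}
    \Norm{u}{2\rho, B(\si n), \mu^\om}.
\end{align*}
Since $r > 1$, the series $\sum_j \al_j^{-1}$, $\sum_j j\, \al_j^{-1}$ and $\sum_j (\log \al_j)/\al_j$ all converge, so the infinite product of prefactors equals a finite constant of the form $C_1\big(\Phi/(\si-\si')^2\big)^{\!\ka}$ for some $\ka \equiv \ka(d,p,q) \in (1,\infty)$. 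Letting $k \to \infty$, the left-hand side converges (by standard $L^p\!\to\!L^\infty$ arguments and regularity of $\mu^\om$) to $\max_{x \in B(\si' n)} |u(x)|$ up to constants absorbed in $C_1$.

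\emph{Step 4 (Removal of the $\mu^\om$-weight).} Finally, Hölder's inequality converts the weighted starting norm $\Norm{u}{2\rho, B(\si n), \mu^\om}$ into an unweighted norm $\Norm{u}{2\rho, B(\si n)}$ raised to some power $\ga \in (0,1]$, at the cost of an extra factor absorbed into $(1 \vee \Norm{\mu^\om}{p} \Norm{\nu^\om}{q})^\ka$ after possibly enlarging $\ka$. This yields \eqref{eq:MP}.

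\emph{Main obstacle.} The delicate point is Step~1: because of the inhomogeneity $\nabla^* V^\om$, the pairing of $\nabla \psi$ with both $\om \nabla u$ and $V^\om$ produces cross terms that must be absorbed simultaneously into $\cE^\om_{\eta^2}(|u|^\al)$ and into the bulk $\mu^\om$-weighted $\ell^1$ term. The two-sided discrete chain-rule estimates \eqref{eq:rule:chain1}--\eqref{eq:rule:chain2} are precisely what permit this absorption while keeping the $\al$-dependence polynomial, which is what ultimately makes the infinite product in Step~3 convergent. The hypothesis $|\nabla f| \leq 1/n$ is used exactly to make the inhomogeneous contribution in Step~1 enter with the same $n^{-2}$ scaling as $\norm{\nabla \eta}{\infty\!}{E}^2$, so that it is compatible with the $(\si-\si')^{-2}$ scaling that drives the iteration.
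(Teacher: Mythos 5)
Your overall strategy (Caccioppoli estimate, weighted Sobolev inequality, dyadic Moser iteration) is the same as the paper's, but Step~1 contains a genuine gap that propagates through the rest. Testing with $\eta^2\,|u|^{2\al-2}u$ produces, besides the term with $\Norm{|u|^{2\al}}{1,B,\mu^{\om}}$, inhomogeneous contributions of \emph{lower} order in $u$: after Young's inequality the drift term gives $\norm{\nabla f}{\infty\!}{E}^2\,\Norm{|u|^{2(\al-1)}}{1,B,\mu^{\om}}$ and the cross term gives $\norm{(\nabla\eta)(\nabla f)}{\infty\!}{E}\,\Norm{|u|^{2\al-1}}{1,B,\mu^{\om}}$ (this is exactly Lemma~\ref{lem:moser:DF}). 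Your claim that these can be absorbed into $C\,\al^2\big(\norm{\nabla\eta}{\infty\!}{E}^2+n^{-2}\big)\Norm{|u|^{2\al}}{1,B,\mu^{\om}}$ with a constant independent of $u$ is false: pointwise $|u|^{2\al-2}$ and $|u|^{2\al-1}$ are \emph{not} dominated by a constant multiple of $|u|^{2\al}$ where $|u|$ is small, and nothing in the hypotheses prevents $u$ from being small (e.g.\ scale $f$ down; the assumption is only an upper bound $|\nabla f|\leq 1/n$). Consequently your Step~2 one-step improvement with exponent exactly $1$ on the right-hand norm is unjustified; if it were true, the theorem would hold with $\ga=1$, whereas the exponent $\ga\in(0,1]$ in \eqref{eq:MP} is precisely the trace of this difficulty. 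The paper handles it by applying Jensen's inequality to the three terms of \eqref{eq:moser:DF}, which yields the one-step estimate with exponent $\ga_k=1$ or $\ga_k=1-1/\al_k$ according to whether $\Norm{u}{2\al_k p_*,B(\si_k n)}\geq 1$ or $<1$, and then $\ga=\prod_k\ga_k$. An alternative standard fix is to iterate with $|u|+1$ in place of $|u|$ so that lower powers are dominated, but that would produce $\Norm{|u|+1}{2\rho,B(\si n)}$ on the right, not the statement \eqref{eq:MP}; either way, some device is needed that your write-up omits.

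Step~4 is also not correct as described: H\"older's inequality does not convert $\Norm{u}{2\rho,B,\mu^{\om}}$ into a power of the unweighted $\Norm{u}{2\rho,B}$; it bounds the weighted $2\rho$-norm by the unweighted $2\rho\, p_*$-norm times $\Norm{\mu^{\om}}{p,B}^{1/(2\rho)}$. In the paper this is why the iteration is set up directly in the unweighted norms $\Norm{u}{2\al_k p_*,B(\si_k n)}$ with $\al_k=(\rho/p_*)^k$ (so $\al_{k+1}p_*=\al_k\rho$ and the base norm is $\Norm{u}{2\rho,B(\si n)}$ up to Jensen), the weights $\Norm{\mu^{\om}}{p}$ and $\Norm{\nu^{\om}}{q}$ entering only through the prefactor; the exponent $\ga$ has nothing to do with removing the $\mu^{\om}$-weight. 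Your Step~3 bookkeeping of the infinite product is fine once the one-step estimate is stated in its correct ($\ga_k$-dependent) form.
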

As a first step, we establish the following lemma.
\begin{lemma}\label{lem:moser:DF}
  Let $B$ be a connected, finite subset of $V$ and $\eta$ be a nonnegative function with $\supp \eta \subset B$, bounded by $1$ and $\eta \equiv 0$ on $\partial B$.  Suppose that $\cL^{\om} u = \nabla^* V^{\om}$ on $B$.  Then there exists $C_2 < \infty$ such that for all $\al \geq 1$
  \begin{align}\label{eq:moser:DF}
    \frac{\cE_{\eta^2}^{\om}(\tilde{u}^{\al})}{|B|}
    &\;\leq\;
    C_2\, \tfrac{\al^4}{(2\al-1)^2}\;
    \norm{\nabla \eta}{\infty\!}{E}^2\;
    \Norm{u}{2\al, B, \mu^{\om}}^{2\al}
    \nonumber\\[1ex]
    &\mspace{32mu}+\,
    C_2\, \tfrac{\al^4}{(2\al-1)^2}\;
    \norm{\nabla f}{\infty\!}{E}^2\;
    \Norm{u}{2(\al-1),B, \mu^{\om}}^{2(\al-1)}
    \nonumber\\[1ex]
    &\mspace{32mu}+\,
    C_2\, \tfrac{\al^2}{2\al-1}\;
    \norm{(\nabla \eta)(\nabla f)}{\infty\!}{E}\;
    \Norm{u}{(2\al-1),B ,\mu^{\om}}^{2\al-1},
  \end{align}
  where $\tilde{u}^{\al}$ denotes the function $\tilde{u}^{\al} \ldef |u|^{\al} \cdot \sign u$.
\end{lemma}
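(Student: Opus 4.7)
The plan is to derive \eqref{eq:moser:DF} by the standard Moser energy estimate: test the equation $\cL^{\om}u=\nabla^*V^{\om}$ against the cutoff $\phi\ldef\eta^2\tilde u^{2\al-1}$. Since $\supp\eta\subset B$ and $\eta\equiv 0$ on $\partial B$, summation by parts via \eqref{eq:ibp} and the adjointness of $\nabla,\nabla^*$ yields
\begin{align*}
  \scpr{\nabla u}{\om\,\nabla(\eta^2\tilde u^{2\al-1})}{E}
  \;=\;
  \cE^{\om}(u,\phi)
  \;=\;
  -\scpr{\nabla(\eta^2\tilde u^{2\al-1})}{\om\,\nabla f}{E}.
\end{align*}
Averaging the two choices available in the discrete product rule \eqref{eq:rule:prod} (placing $\eta^2$ at $e^+$ versus $e^-$) produces the symmetric expansion
\begin{align*}
  \nabla(\eta^2\tilde u^{2\al-1})(e)
  \;=\;
  \tfrac{\eta^2(e^-)+\eta^2(e^+)}{2}\,\nabla\tilde u^{2\al-1}(e)
  \,+\,
  \nabla\eta^2(e)\cdot\tfrac{\tilde u^{2\al-1}(e^-)+\tilde u^{2\al-1}(e^+)}{2}.
\end{align*}
Substituted into both sides of the identity, this produces on each side a \emph{main} piece in which $\nabla\tilde u^{2\al-1}$ is weighted by $\tfrac12(\eta^2(e^-)+\eta^2(e^+))\om(e)$, matching the edge weight of $\cE_{\eta^2}^{\om}$, together with a \emph{cross} piece carrying an extra factor of $\nabla\eta^2$.

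The main piece on the LHS, namely $\sum_e\tfrac12(\eta^2(e^-)+\eta^2(e^+))\om(e)\,\nabla u(e)\,\nabla\tilde u^{2\al-1}(e)$, is bounded below by $\tfrac{2\al-1}{\al^2}\,\cE_{\eta^2}^{\om}(\tilde u^{\al})$ thanks to the pointwise numerical inequality
\begin{align*}
  (a-b)\,\big(a|a|^{2\al-2}-b|b|^{2\al-2}\big)
  \;\geq\;
  \tfrac{2\al-1}{\al^2}\,\big(a|a|^{\al-1}-b|b|^{\al-1}\big)^2,
  \qquad a,b\in\bbR,\ \al\geq 1,
\end{align*}
which is of the same flavour as the elementary estimates collected in the Appendix (and may be obtained, for $0\leq b\leq a$, by Cauchy--Schwarz applied to $a^{\al}-b^{\al}=\al\int_b^a t^{\al-1}\md t$ together with $a^{2\al-1}-b^{2\al-1}=(2\al-1)\int_b^a t^{2\al-2}\md t$). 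The three remaining terms in the identity --- the LHS cross term, the RHS drift-main term $\tfrac12\sum_e(\eta^2(e^-)+\eta^2(e^+))\om\,\nabla f\,\nabla\tilde u^{2\al-1}$, and the RHS drift-cross term --- each carry at most one factor of $\nabla u$ or $\nabla\tilde u^{2\al-1}$, paired with either $\nabla\eta$ or $\nabla f$ and some power of $|u|$.

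Each of these three terms is to be handled by Young's inequality $|xy|\leq\ve x^2+y^2/(4\ve)$ with $\ve$ of order $(2\al-1)/\al^2$, chosen small enough that the resulting multiple of $\cE_{\eta^2}^{\om}(\tilde u^{\al})$ can be absorbed into the LHS. For the two terms containing a $\nabla u$- or $\nabla\tilde u^{2\al-1}$-factor I would first invoke \eqref{eq:rule:chain1}-\eqref{eq:rule:chain2} to trade $\nabla\tilde u^{2\al-1}$ for factors of $\nabla\tilde u^{\al}$ and $|u|^{\al-1}$ up to a multiplicative $\al$, and then apply Young's inequality; the pointwise bound $\om(e)\leq\mu^{\om}(e^-)\wedge\mu^{\om}(e^+)$ converts the resulting $\om\cdot|u|^{2k}$ edge sums into the space-averaged $\mu^{\om}$-weighted norms $\Norm{u}{2\al,B,\mu^{\om}}^{2\al}$ and $\Norm{u}{2(\al-1),B,\mu^{\om}}^{2(\al-1)}$ on $B$, giving the first two terms of \eqref{eq:moser:DF} with the stated $\al^4/(2\al-1)^2$ prefactor. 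The drift-cross term has no gradient of $u$ at all and is estimated directly in absolute value, using $|\nabla\eta^2|\leq 2\norm{\nabla\eta}{\infty}{E}$ and $|\tilde u^{2\al-1}|=|u|^{2\al-1}$, to produce the third term with its $\al^2/(2\al-1)$ prefactor. The main obstacle I foresee is the precise $\al$-bookkeeping: the small coefficient $(2\al-1)/\al^2$ from the pointwise inequality, the chain-rule factor $\sim\al$ from \eqref{eq:rule:chain1}-\eqref{eq:rule:chain2}, and the Young parameter $\ve\sim(2\al-1)/\al^2$ must combine to yield exactly $\al^4/(2\al-1)^2$ on the two quadratic remainders and $\al^2/(2\al-1)$ on the linear one, while simultaneously leaving a uniformly positive residual coefficient of $\cE_{\eta^2}^{\om}(\tilde u^{\al})$ on the LHS so that one may divide out and conclude.
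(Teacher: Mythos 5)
Your proposal is correct and follows essentially the same route as the paper's proof: testing with $\eta^2\tilde u^{2\al-1}$, splitting via the discrete product rule, bounding the main term below by $\tfrac{2\al-1}{\al^2}\,\cE^{\om}_{\eta^2}(\tilde u^{\al})$ through the pointwise inequality \eqref{eq:A1:pol:ub}, and treating the two gradient-bearing remainder terms with the chain-rule estimates and Young's inequality while estimating the drift-cross term directly. The $\al$-bookkeeping you worry about closes exactly as anticipated: taking the Young parameter $\ve=(2\al-1)/(2c\,\al^2)$ leaves a residual factor $\tfrac12\cdot\tfrac{2\al-1}{\al^2}$ of $\cE^{\om}_{\eta^2}(\tilde u^{\al})$ on the left and produces the stated prefactors $\al^4/(2\al-1)^2$ and $\al^2/(2\al-1)$.
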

\begin{proof}
  Since $u$ solves the Poisson equation $\cL^{\om} u = \nabla^* V^{\om}$, \eqref{eq:ibp} implies
  \begin{align*}
    \scpr{\nabla (\eta^2 \tilde{u}^{2\al-1})}{\om\, \nabla u}{E}
    \;=\;
    \scpr{\eta^2 \tilde{u}^{2\al-1}}{-\cL^{\om} u}{V}
    \;=\;
    - \scpr{\nabla (\eta^2 \tilde{u}^{2\al-1})}{V^{\om}}{E}.
  \end{align*}
  Hence,
  \begin{align}\label{eq:moser:weight:split1}
    &\scpr{\eta^2 \cdot \nabla \tilde{u}^{2\al-1}}{\om\, \nabla u}{E}
    \nonumber\\[.5ex]
    &\mspace{36mu}\leq\;
    \norm{\om\, (\nabla u)\,(\nabla \eta^2) \cdot |u|^{2\al-1}}{1\!}{E}
    \,+\,
    \norm{\nabla (\eta^2 \tilde{u}^{2\al-1})\, V^{\om}}{1\!}{E}.
  \end{align}
  As an immediate consequence of \eqref{eq:A1:pol:ub}, we get
  \begin{align} \label{eq:estDF}
    \scpr{\eta^2 \cdot \nabla \tilde{u}^{2\al-1}}{\om\, \nabla u}{E}
    \;\geq\;
    \tfrac{2\al-1}{\al^2}\; \cE_{\eta^2}^{\om}(\tilde{u}^{\al}).
  \end{align}
  The constant $c \in (0, \infty)$ appearing in the computations below is independent of $\alpha$ but may change from line to line.  Consider the first term on the right-hand side of \eqref{eq:moser:weight:split1}.  Since \eqref{eq:A1:chain:ub2} is applicable after a suitable symmetrization, we find
  \begin{align*}
    \norm{\om\, (\nabla u)\,(\nabla \eta^2) \cdot |u|^{2\al-1}}{1\!}{E}
    &\;\leq\;
    c\;
    \norm{\om\, (\nabla \tilde{u}^{\al})\,(\nabla \eta^2) \cdot |u|^{\al}}{1\!}{E}
    \nonumber\\[.5ex]
    &\;\leq\;
    c\, \ve\; \cE_{\eta^2}^{\om}(\tilde{u}^{\al})
    \,+\,
    \frac{c}{\ve}\, \norm{\nabla \eta}{\infty}{E}^{2}\,
    \norm{|u|^{2 \al} \mu^{\om}}{1}{B},
  \end{align*}
  where we used the Young's inequality $|a b| \leq \frac{1}{2}(\ve a^2 + b^2 / \ve)$.  On the other hand, by \eqref{eq:rule:prod} the second term on the right-hand side of \eqref{eq:moser:weight:split1} reads
  \begin{align*}
    \norm{\eta^2 \cdot (\nabla \tilde{u}^{2\al-1})\, V^{\om}}{1}{E}
    \,+\,
    \norm{V^{\om}\, (\nabla \eta^2) \cdot |u|^{2\al-1})}{1}{E}.
  \end{align*}
  Since $\eta$ is bounded by $1$,
  \begin{align*}
    \norm{V^{\om}\, \nabla \eta^2 \cdot |u|^{2\al-1}}{1}{E}
    \;\leq\;
    2\,
    \norm{(\nabla \eta)(\nabla f)}{\infty\!}{E}\,
    \norm{|u|^{2\al-1}\mu^{\om}}{1}{B}.
  \end{align*}
  By applying \eqref{eq:A1:chain:ub1} and again Young's inequality, we find
  \begin{align*}
    \norm{\eta^2 \cdot (\nabla \tilde{u}^{2\al-1})\, V^{\om}}{1}{E}
    &\;\leq\;
    c\;
    \norm{\eta^2\, |u|^{\al-1} \cdot \om\, (\nabla \tilde{u}^{\al})\, (\nabla f)}
         {1}{E}
    \nonumber\\[.5ex]
    &\;\leq\;
    c\, \ve\, \cE_{\eta^2}^{\om}(\tilde{u}^\al)
    \,+\,
    \frac{c}{\ve}\, \norm{\nabla f}{\infty}{E}^2\,
    \norm{|u|^{2(\al-1)} \mu^{\om}}{1}{B}
  \end{align*}
  Hence, by combining the estimates above and solving for $\cE_{\eta^2}^{\om}(\tilde{u}^\al)$, we obtain
  \begin{align*}
    \Big(
      1 - c\,\ve\, \tfrac{\al^2}{2\al-1}
    \Big)\;
    \frac{\cE_{\eta^2}^{\om}(\tilde{u}^{\al})}{|B|}
    \;\leq\;
    a + b
  \end{align*}
  with
  \begin{align*}
    a
    \;=\;
    \frac{c}{\ve}\; \tfrac{\al^2}{2\al-1}\,
    \Big(
      \norm{\nabla \eta}{\infty\!}{E}^2\,
      \Norm{|u|^{2 \al}}{1, B, \mu^{\om}}
      \,+\,
      \norm{\nabla f}{\infty\!}{E}^2\,
      \Norm{|u|^{2(\al-1)}}{1, B, \mu^{\om}}
    \Big)
  \end{align*}
  and
  \begin{align*}
    b
    &\;=\;
    c\; \tfrac{\al^2}{2 \al -1}\,\
    \norm{(\nabla \eta)(\nabla f)}{\infty\!}{E}\,
    \Norm{|u|^{2\al-1}}{1, B, \mu^{\om}}.
  \end{align*}
  By choosing $\ve = (2\al-1)/ 2 c\, \al^2$, the assertion follows.
\end{proof}
\begin{figure}[t]
  \label{fig:balls}
  \begin{center}
    \includegraphics{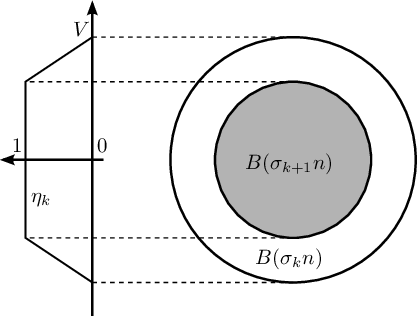}
  \end{center}
  \caption{Ilustration of both the balls $B(\si_k n)$ and $B(\si_{k+1} n)$ and the corresponding cut-off function $\eta_k$}
\end{figure}
\begin{proof}[Proof of Theorem~\ref{thm:MP}]
  For fixed $1/2 \leq \si' < \si \leq 1$, consider a sequence $\{B(\si_k n)\}_k$ of balls with radius $\si_k n$ centered at some $x_0$, where
  \begin{align*}
    \si_k \;=\; \si' + 2^{-k} (\si - \si')
    \qquad \text{and} \qquad
    \tau_k \;=\; 2^{-k-1} (\si - \si'),
    \quad k = 0, 1, \ldots
  \end{align*}
  Then $\si_{k} = \si_{k+1} + \tau_{k}$ and $\si_0 = \si$.  Further, for any $p, q \in (1,\infty)$ the condition in \eqref{cond:pq} implies that $\rho > p_* \equiv p/(p-1)$. Hence, by setting $\al_k = (\rho / p_*)^k$, we have that $\al_k > 1$ for every $k \geq 1$.   Due to the discrete structure of the underlying space $\bbZ^d$, the discrete balls $B(\si_{k+1} n)$ and $B(\si_k n)$ may coincide whenever $\tau_k n$ is sufficiently small.  For this reason, we proceed by distinguishing two different cases.  
  
  First consider the case $\tau_k n \geq 1$.  Let $\eta_k$ be a cut-off function with $\supp \eta_k \subset B(\si_{k} n)$ having the property that $\eta_k \equiv 1$ on $B(\si_{k+1} n)$, $\eta_k \equiv 0$ on $\partial B(\si_{k} n)$ and $\eta_k$ decays linearly on $B(\si_{k} n) \setminus B(\si_{k+1} n)$.  In particular, this choice of $\eta_k$ implies that $|\nabla \eta_k(e)| \leq 1/\tau_k n $ for all $e \in E$.  By applying the Sobolev inequality \eqref{eq:sob:ineq} to $\tilde{u}^{\al_k}$ and using H\"older's inequality, we obtain
  \begin{align*}
    &\Norm{(\eta\, \tilde{u}^{\al_k})^2}{\rho, B(\si_{k} n)}
    \nonumber\\[.5ex]
    &\mspace{36mu}\leq\;
    C_{\mathrm{S}}\, |B(\si_k n)|^{2/d}\, \Norm{\nu^\om}{q,B(\si_{k} n)}
    \Bigg(
      \frac{\cE_{\eta_k^2}^{\om}(\tilde{u}^{\al_k})}{|B(\si_{k} n)|}
      \,+\,
      \frac{\Norm{\mu^{\om}}{p,B(\si_k n)}}{(\tau_k n)^2}\,
      \,\Norm{\tilde{u}^{2\al_k}}{p_*,B(\si_{k} n)}
    \Bigg).
  \end{align*}
  On the other hand, by means of Jensen's inequality we obtain from \eqref{eq:moser:DF} that
  \begin{align*}
    \frac{\cE_{\eta_k^2}^{\om}(\tilde{u}^{\al_k})}{|B(\si_k n)|}
    \;\leq\;
    3\, C_2\, \Norm{\mu^{\om}}{p,B(\si_k n)}\,
    \bigg(\frac{\al_k}{\tau_k n}\bigg)^2\,
    \Norm{u}{2 \al_k p_*,B(\si_k n)}^{2 \al_k \ga_k},
  \end{align*}
  where $\ga_k = 1$ if $\|u\|_{2 \al_k p_*,B(\si_k n)} \geq 1$ and $\ga_k = 1 - \frac{1}{\al_k}$ if $\|u\|_{2 \al_k p_*,B(\si_k n)} < 1$.  By combining these two estimates and using that $\alpha_{k+1} p_* = \alpha_k \rho$, we find
  \begin{align}\label{eq:iteration:est1}
    \Norm{u}{2 \al_{k+1} p_*,B(\si_{k+1} n)}
    \;\leq\;
    \bigg(
      c\, \frac{2^{2k}\, \al_k^2}{(\si - \si')^{2}}\,
      \Norm{\mu^{\om}}{p,B(n)}\, \Norm{\nu^\om}{q,B(n)}
    \bigg)^{\!1/(2\al_k)}\,
    \Norm{u}{2 \al_k p_*,B(\si_k n)}^{\ga_k}
  \end{align}
  for some $c < \infty$.  Next we consider the case $\tau_k n < 1$. Note that
  \begin{align*}
    \Norm{|u|^{2 \al_k}}{\al p_*, B(\si_{k+1} n)}
    &\;\leq\;
    \Norm{|u|^{2 \al_k}}{p_*, B(\si_k n)}^{1/\al}
    \Big( \max_{x \in B(\si_k n)} |u(x)|^{2 \al_k} \Big)^{\!\!1/\al_*}
    \\[.5ex]
    &\;\leq\;
    |B(\si_k n)|^{1/(\al_* p_*)} \Norm{|u|^{2 \al_k}}{p_*, B(\si_k n)}.
  \end{align*}
  Since $d/(2 \al_* p_*) \leq 1$ and $n < 1 / \tau_k$, we find
  \begin{align}\label{eq:iteration:est2}
    \Norm{u}{2 \al_{k+1} p_*,B(\si_{k+1} n)}
    \;\leq\;
    \bigg(
      c\, \frac{2^{2k}}{(\si - \si')^{2}}\,
    \bigg)^{\!1/(2\al_k)}\,
    \Norm{u}{2 \al_k p_*,B(\si_k n)}^{\ga_k}.
  \end{align}
  Observe that $|B_{K}|^{1/2\al_K} \leq c$ uniformly in $n$ for any $K$ such that $\al_K \geq \ln n$. Hence,
  \begin{align*}
    \max_{x \in B(\si' n)} |u(x)|
    \;\leq\;
    |B(\si_K n)|^{1/(2 \al_K)}\, \Norm{u}{2 \al_K, B(\si_K n)}
    \;\leq\;
    c\, \Norm{u}{2 \al_n p_*, B(\si_K n)}
  \end{align*}
  By iterating the inequalities \eqref{eq:iteration:est1} and \eqref{eq:iteration:est2}, respectively, we get
  \begin{align*}
    \max_{x \in B(\si' n)} |u(x)|
    \;\leq\;
    C_1\, \prod_{k = 0}^{K-1}
    \Bigg(
      \frac{1 \vee \Norm{\mu^{\om}}{p,B(n)}\, \Norm{\nu^\om}{q,B(n)}}
      {(\si-\si')^2}
    \Bigg)^{\!\!1/(2\al_k)}\,
    \Norm{u}{2 \rho, B(\si n)}^{\ga}
  \end{align*}
  where $0 < \ga = \prod_{k=1}^{\infty} \ga_k  \leq 1$ and $C_1 < \infty$ is a constant independent of $k$, since $\sum_{k=0}^{\infty}k/\al_k < \infty$.  Finally, choosing $\ka = \frac{1}{2} \sum_{k=0}^\infty 1/\al_k <\infty$, the claim is immediate.
\end{proof}
\begin{corro}\label{cor:MP:general}
  Suppose $u$ satisfies the assumptions of Theorem~\ref{thm:MP}.  Then, for all $\al \in (0, \infty)$ and for any $1/2 \leq \si' < \si \leq 1$, there exists $C_3 < \infty$, $\ga' \equiv \ga'(\ga, \al, \rho)$ and $\ka' \equiv \ka'(\ka, \ga, \al, \rho) < \infty$ such that
  \begin{align} \label{eq:MP:general}
    \max_{x \in B(\si' n)} |u(x)|
    \;\leq\;
    C_3\,
    \Bigg(
      \frac{1\vee \Norm{\mu^{\om}}{p,B(n)}\, \Norm{\nu^{\om}}{q,B(n)}}
           {(\si - \si')^{2}}
    \Bigg)^{\!\!\ka'}\; 
    \Norm{u}{\al, B(\si n)}^{\ga'}.
  \end{align}
\end{corro}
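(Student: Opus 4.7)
The plan is to reduce Corollary~\ref{cor:MP:general} to Theorem~\ref{thm:MP} by splitting into two regimes based on the size of $\al$ relative to $2\rho$. When $\al \geq 2 \rho$, Jensen's inequality applied to the space-averaged probability measure $|B(\si n)|^{-1} \sum_{x \in B(\si n)}$ gives $\Norm{u}{2\rho, B(\si n)} \leq \Norm{u}{\al, B(\si n)}$, and Theorem~\ref{thm:MP} already delivers the claimed inequality with $\ga' = \ga$ and $\ka' = \ka$.

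The substantive case is $0 < \al < 2\rho$. Here the key tool is the elementary interpolation
\begin{align*}
  \Norm{u}{2\rho, B(r)} \;\leq\; \Big( \max_{x \in B(r)} |u(x)| \Big)^{1 - \al/(2\rho)}\, \Norm{u}{\al, B(r)}^{\al/(2\rho)},
\end{align*}
obtained by writing $|u|^{2\rho} = |u|^{2\rho - \al}\cdot |u|^{\al}$ and bounding the first factor by its supremum. Substituting this bound into the conclusion of Theorem~\ref{thm:MP} (applied between two intermediate radii $\si_1 < \si_2$), I obtain
\begin{align*}
  M(\si_1) \;\leq\; C_1 \Bigg( \frac{A}{(\si_2 - \si_1)^2} \Bigg)^{\!\!\ka}\, M(\si_2)^{\theta}\, \Norm{u}{\al, B(\si_2 n)}^{\ga\,\al/(2\rho)},
\end{align*}
where $M(r) \ldef \max_{x \in B(r n)} |u(x)|$, $A \ldef 1 \vee \Norm{\mu^{\om}}{p, B(n)} \Norm{\nu^{\om}}{q, B(n)}$, and the exponent $\theta \ldef \ga\,(1 - \al/(2\rho))$ is strictly less than $1$ (since $\ga \leq 1$ and $\al > 0$).

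The final step is a standard dyadic iteration. Setting $\si_k \ldef \si' + (1 - 2^{-k})(\si - \si')$, so that $\si_0 = \si'$, $\si_k \uparrow \si$, and $\si_{k+1} - \si_k = 2^{-k-1}(\si - \si')$, and absorbing the ratios $|B(\si_{k+1} n)|/|B(\si n)|$ into a universal constant via the volume regularity \eqref{eq:ass:vd}, the previous estimate becomes
\begin{align*}
  M(\si_k) \;\leq\; c\, \Bigg( \frac{4^{k+1}\, A}{(\si - \si')^2} \Bigg)^{\!\!\ka}\, M(\si_{k+1})^{\theta}\, \Norm{u}{\al, B(\si n)}^{\ga\,\al/(2\rho)}.
\end{align*}
Iterating this across $k = 0, 1, \ldots, N-1$ and letting $N \to \infty$, the residual factor $M(\si_N)^{\theta^N}$ tends to $1$ since $u$ lives on the finite set $B(n)$ (so $M(\si) < \infty$) and $\theta^N \to 0$; the convergent geometric sums $\sum_k \theta^k = 1/(1-\theta)$ and $\sum_k (k+1)\,\theta^k < \infty$ then produce
\begin{align*}
  M(\si') \;\leq\; C_3 \Bigg( \frac{A}{(\si - \si')^2} \Bigg)^{\!\!\ka /(1-\theta)}\, \Norm{u}{\al, B(\si n)}^{\ga\,\al/(2\rho (1-\theta))},
\end{align*}
which is the claim with $\ka' = \ka/(1-\theta)$ and $\ga' = \ga\, \al/(2 \rho (1-\theta))$.

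The main point requiring care is the strict inequality $\theta < 1$, which is essential for the absorption step of the iteration; this is guaranteed precisely by the hypothesis $\al > 0$ together with $\ga \leq 1$ from Theorem~\ref{thm:MP}. Everything else is bookkeeping of constants through the geometric series, and the a priori finiteness of $M(\si)$ is automatic on the finite graph $B(n)$, so no additional truncation argument is needed.
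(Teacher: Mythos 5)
Your argument is correct and is essentially the paper's own proof: the case $\al\geq 2\rho$ by Jensen, and for $\al\in(0,2\rho)$ the interpolation $\Norm{u}{2\rho}\leq\Norm{u}{\infty}^{1-\al/(2\rho)}\Norm{u}{\al}^{\al/(2\rho)}$ combined with a dyadic iteration of radii absorbing the sup-norm, exactly as in the Saloff-Coste scheme the paper follows, with the same resulting exponents $\ka'=\ka/(1-\ga(1-\al/(2\rho)))$ and $\ga'=\ga\al/(2\rho(1-\ga(1-\al/(2\rho))))$.
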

\begin{proof}
  In view of \eqref{eq:MP}, for any $\al > 2 \rho$ the statement \eqref{eq:MP:general} is an immediate consequence of Jensen's inequality.  It remains to consider the case $\al \in (0, 2 \rho)$.  But from \eqref{eq:MP} we have for any $1/2 \leq \si' < \si \leq 1$
  \begin{align}\label{eq:MP:general1}
    \max_{x \in B(\si' n)} |u(x)|
    \;\leq\;
    C_2\,
    \Bigg(
      \frac{1\vee \Norm{\mu^{\om}}{p,B(n)}\, \Norm{\nu^{\om}}{q, B(n)}}
           {(\si - \si')^{2}}
    \Bigg)^{\!\!\ka}\;
    \Norm{u}{2\rho,B(\si n)}^{\ga}.
  \end{align}
  The remaining part of the proof follows the arguments in \cite[Theorem 2.2.3]{SC02}.  In the sequel, let $1/2 \leq \si' < \si \leq 1$ be arbitrary but fixed and set $\si_k = \si - 2^{-k}(\si - \si')$ for any $k \in \bbN_0$.  Now, by H\"older's inequality, we have for any $\al \in (0, 2\rho)$
  \begin{align*}
    \Norm{u}{2 \rho, B(\si_k n)}
    \;\leq\;
    \Norm{u}{\al, B(\si_k n)}^{\theta}\;
    \Norm{u}{\infty, B(\si_k n)}^{1-\th}
  \end{align*}
  where $\th = \al / 2 \rho$.  Hence, in view of \eqref{eq:MP:general1} and the volume regularity which implies that $|B(\si n)| / |B(\si' n)| \leq C_{\mathrm{reg}}^2 2^d$, it holds that
  \begin{align*}
    \Norm{u}{\infty, B(\si_{k-1} n)}
    \;\leq\;
    2^{2 \ka k}\, J\; \Norm{u}{\al, B(\si n)}^{\ga \th}\;
    \Norm{u}{\infty, B(\si_k n)}^{\ga - \ga \th},
  \end{align*}
  where we introduced $J = c\, \big(1 \vee \Norm{\mu^{\om}}{p,B(n)}\, \Norm{\nu^{\om}}{q, B(n)}/ (\si - \si')^{2} \big)^{\ka}$ to lighten notation.  By iteration, we get
  \begin{align*}
    \Norm{u}{\infty, B(\si' n)}
    \;\leq\;
    2^{2\ka \sum_{k=0}^{i-1} (k+1)(\ga - \ga \th)^k}\;
    \Big(
      J\; \Norm{u}{\al, B(\si n)}^{\ga \th}
    \Big)^{\sum_{k=0}^{i-1} (\ga - \ga\th)^k}\;
    \Norm{u}{\infty, B(\si_i n)}^{(\ga - \ga \th)^i}.
  \end{align*}
  Since $\ga(1 - \th) \in (0,1)$, as $i$ tends to infinity, this yields
  \begin{align*}
    \max_{x \in B(\si' n)} |u(x)|
    \;\leq\;
    2^{\frac{2\ka}{(1 - \ga + \ga\th)^2}}\, J^{\frac{1}{1 - \ga + \ga\th}}\;
    \Norm{u}{\al, B(\si n)}^{\frac{\ga \th}{1 - \ga + \ga \th}}
  \end{align*}
  and \eqref{eq:MP:general} is immediate.
\end{proof}

\appendix
\section{Technical estimates}
In this section, we provide proofs of some technical estimates needed in the proof of the Moser iteration.  In a sense, some of them may be seen as a replacement for a discrete chain rule.  Some extra care is required since the solution of the Poisson equation may be negative.
\begin{lemma}
  For $a\in \bbR$, we write $\tilde a^{\al}:=|a|^{\al} \cdot \sign a$ for any $\al \in \bbR \setminus \{0\}$.
  \begin{enumerate}[ (i) ]
    \item For all $a,b \in \bbR$ and any $\al, \be \ne 0$,
      \begin{align}\label{eq:A1:chain:ub1}
        \big| \tilde a^{\al} - \tilde b^{\al} \big|
        \;\leq\;
        \Big(1 \vee \Big|\frac{\al}{\be}\Big| \Big) \,
        \big|\tilde{a}^{\be} - \tilde{b}^{\be}\big|\,
        \big(\,|a|^{\al-\be} + |b|^{\al-\be} \big).
      \end{align}
    \item For all $a,b \in \bbR$ and any $\al > 1/2$,
      \begin{align}\label{eq:A1:pol:ub}
        \big(\tilde{a}^{\al} - \tilde{b}^{\al}\big)^2
        \;\leq\;
        \bigg|\frac{\al^2}{2\al-1}\bigg|\, \big(a - b\big)\,
        \big(\tilde{a}^{2\al-1} - \tilde{b}^{2\al-1}\big).
      \end{align}
      In particular, if $a,b \in \bbR_+$ then \eqref{eq:A1:pol:ub} holds for all $\al \not\in \{0,1/2\}$. 
    \item For all $a,b \in \bbR$ and any $\al,\be \geq 0$,
      \begin{align}\label{eq:A1:chain:lo}
        \big(|a|^{\al} + |b|^{\al}\big)\,
        \big|\tilde{a}^{\be} -\, \tilde{b}^{\be}\big|
        \;\leq\;
        2\, \big|\tilde{a}^{\al+\be} -\, \tilde{b}^{\al+\be}\big|.
      \end{align}
    \item For all $a,b \in \bbR$ and any $\al \geq 1/2$,
      \begin{align}\label{eq:A1:chain:ub2}
        \big(|a|^{2\al-1} + |b|^{2\al-1}\big)\, \big|a - b\big|
        \;\leq\; 
        4\,
        \big|\tilde a^{\al} - \tilde b^{\al}\big|\,
        \big(|a|^{\al} + |b|^{\al}\big).
      \end{align}
  \end{enumerate}
\end{lemma}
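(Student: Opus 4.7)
The four estimates all follow by (a) case analysis on the signs of $a$ and $b$, and (b) exploiting the identity
\[
  \tfrac{\md}{\md t}\, \tilde{t}^{\alpha} \;=\; \alpha\, |t|^{\alpha-1},
  \qquad t \neq 0,
\]
valid on each of the half-lines $(-\infty,0)$ and $(0,\infty)$, which yields the integral representation $\tilde{a}^{\alpha} - \tilde{b}^{\alpha} = \alpha \int_b^a |t|^{\alpha-1}\, \md t$ whenever $\alpha > 0$ or $0 \notin [b,a]$. When $a > 0 > b$ and $\alpha > 0$, one additionally has the direct expansion $\tilde{a}^{\alpha} - \tilde{b}^{\alpha} = a^{\alpha} + |b|^{\alpha}$, which handles the mixed-sign case for (i) and (iii) elementarily.

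For (i), with $a > b > 0$ one computes
\[
  \frac{|\tilde{a}^{\alpha} - \tilde{b}^{\alpha}|}
       {|\tilde{a}^{\beta}  - \tilde{b}^{\beta}|}
  \;=\;
  \frac{|\alpha|}{|\beta|}\,
  \frac{\int_b^a t^{\alpha-1}\, \md t}{\int_b^a t^{\beta-1}\, \md t}
  \;\leq\;
  \frac{|\alpha|}{|\beta|}\, \max_{t\in[b,a]} t^{\alpha-\beta}
  \;\leq\;
  \frac{|\alpha|}{|\beta|}\bigl(a^{\alpha-\beta} + b^{\alpha-\beta}\bigr),
\]
which is already stronger than the claimed bound (which has $1 \vee |\alpha/\beta|$); the opposite-sign case reduces to the trivial observation that $(a^{\beta} + |b|^{\beta})(a^{\alpha-\beta} + |b|^{\alpha-\beta})$ dominates $a^{\alpha} + |b|^{\alpha}$ after expansion. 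For (ii), applying Cauchy--Schwarz to the same integral representation yields
\[
  \bigl(\tilde{a}^{\alpha} - \tilde{b}^{\alpha}\bigr)^{2}
  \;=\;
  \alpha^{2}\!\left(\int_b^a |t|^{\alpha-1}\, \md t\right)^{\!2}
  \;\leq\;
  \alpha^{2}\,(a-b) \int_b^a |t|^{2\alpha-2}\, \md t
  \;=\;
  \frac{\alpha^{2}}{2\alpha-1}\,(a-b)\bigl(\tilde{a}^{2\alpha-1} - \tilde{b}^{2\alpha-1}\bigr),
\]
where $\alpha > 1/2$ ensures integrability of $|t|^{2\alpha-2}$ across a zero-crossing interval; for $a,b > 0$ the absolute values may be dropped and any $\alpha \notin \{0,1/2\}$ is admissible.

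For (iii), after reducing to same-sign positive $a > b > 0$, the substitution $t = a/b > 1$ reduces the claim to the factorization $(t^{\alpha} - 1)(t^{\beta} + 1) \geq 0$, which is immediate for $\alpha,\beta \geq 0$; the opposite-sign case $a > 0 > b$, $c := |b|$, is equivalent to the rearrangement inequality $a^{\alpha}c^{\beta} + a^{\beta}c^{\alpha} \leq a^{\alpha+\beta} + c^{\alpha+\beta}$, i.e., to $(a^{\alpha} - c^{\alpha})(a^{\beta} - c^{\beta}) \geq 0$. For (iv) with $a > b > 0$ and $\alpha \geq 1$, direct expansion gives $(a-b)(a^{2\alpha-1} + b^{2\alpha-1}) = (a^{2\alpha} - b^{2\alpha}) - ab(a^{2\alpha-2} - b^{2\alpha-2}) \leq a^{2\alpha} - b^{2\alpha} = (a^{\alpha}-b^{\alpha})(a^{\alpha}+b^{\alpha})$, so even a factor $1$ suffices; for $\alpha \in [1/2,1)$ the function $t \mapsto t^{2\alpha-1}$ is concave, so the trapezoid underestimate $\tfrac{1}{2}(a^{2\alpha-1}+b^{2\alpha-1})(a-b) \leq \int_b^a t^{2\alpha-1}\md t = (a^{2\alpha}-b^{2\alpha})/(2\alpha)$ gives the bound with factor $1/\alpha \leq 2$. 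In the opposite-sign case, AM--GM yields $a^{2\alpha-1}c + ac^{2\alpha-1} \leq 2(a^{2\alpha} + c^{2\alpha})$, from which factor $4$ comfortably absorbs the cross terms.

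The only real subtlety is bookkeeping across the sign cases, particularly because $t \mapsto |t|^{\alpha-1}$ is not integrable across zero when $\alpha < 1$, so the integral-representation arguments used in (i) and (ii) do not cover straddle-zero configurations uniformly; the discrete factorizations and AM--GM bounds in (iii), (iv) are what close those gaps. Beyond this, the estimates are routine, and the generous constants (especially the $4$ in (iv)) reflect that several of the partial bounds are lossy but combine cleanly.
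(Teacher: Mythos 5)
Your proof is correct, and for parts (i)--(iii) it follows essentially the same route as the paper: the integral representation $\tilde a^{\al}-\tilde b^{\al}=\al\int_b^a|t|^{\al-1}\,\md t$ together with bounding $t^{\al-\be}$ by its values at the endpoints for the same-sign case of (i), Cauchy--Schwarz on that representation for (ii), and the same sign-splitting with expansion/rearrangement identities for (iii) (your factorization $(t^{\al}-1)(t^{\be}+1)\ge 0$ is exactly the paper's identity in the variable $z=b/a$). The only genuine divergence is (iv): the paper gets it in one line by chaining (iii) with exponents $2\al-1$ and $1$, which gives $(|a|^{2\al-1}+|b|^{2\al-1})\,|a-b|\le 2\,|\tilde a^{2\al}-\tilde b^{2\al}|$, with (i) applied to the exponents $2\al$ and $\al$, which gives $|\tilde a^{2\al}-\tilde b^{2\al}|\le 2\,|\tilde a^{\al}-\tilde b^{\al}|\,(|a|^{\al}+|b|^{\al})$; your direct treatment (exact expansion for $\al\ge 1$, the trapezoid bound for the concave map $t\mapsto t^{2\al-1}$ when $\al\in[1/2,1)$, and Young/AM--GM in the mixed-sign case) is correct but redoes work that (i) and (iii) already encode, at the price of an extra case split. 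One small observation in your favour on (ii): the supplementary claim for $a,b\in\bbR_+$ and $\al\notin\{0,1/2\}$ only makes sense with the signed constant $\al^2/(2\al-1)$ (or with absolute values on the last two factors), since for $\al<1/2$ and $a>b>0$ the product $(a-b)(a^{2\al-1}-b^{2\al-1})$ is negative while the left-hand side is a square; your reading, dropping the absolute values, is the consistent one, and it is what the Cauchy--Schwarz computation actually produces.
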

\begin{proof}
  \textit{(i)} First of all assume that $a,b \geq 0$ or $a,b \leq 0$.  Then, for all $\al, \be \ne 0$,
  \begin{align*}
    \big|\tilde{a}^{\al} - \tilde{b}^{\al}\big|
    \;=\;
    \big||a|^{\al} - |b|^{\al}\big|
    \;=\;
    \bigg|
      \al\, \int_{|b|}^{|a|} t^{\be-1}\, t^{\al-\be}\, \md t
    \bigg|
    \;\leq\;
    \bigg|\frac{\al}{\be}\bigg| \bigg(\max_{t \in [|b|, |a|]} t^{\al-\be}\bigg)
    \big| \tilde{a}^{\be} - \tilde{b}^{\be} \big|.
  \end{align*}
  Since $t \mapsto t^{\al-\be}$ for $t \geq 0$ is monotone decreasing if $\al -\be < 0$ and monotone increasing if $\al-\be > 0$, the maximum is attained at one of the boundary points.  In particular, $\max_{t \in [|b|, |a|]} t^{\al-\be} \leq |a|^{\al-\be} + |b|^{\al-\be}$.  On the other hand, for $a \geq 0, b\leq 0$ or $a \leq 0, b\geq 0$, it holds that $\big|\tilde{a}^{\al}-\tilde{b}^{\al}\big| = |a|^{\al} + |b|^{\al}$.  Hence,
  \begin{align*}
    \big|\tilde{a}^{\al} - \tilde{b}^{\al}\big|
    \;\leq\;
    \big(|a|^{\be} + |b|^{\be}\big)\big(|a|^{\al-\be} + |b|^{\al-\be}\big)
    \;=\;
    \big|\tilde{a}^{\be} - \tilde{b}^{\be}\big|
    \big(|a|^{\al-\be} + |b|^{\al-\be}\big).
  \end{align*}

  \textit{(ii)} Notice that for all $a,b \in \bbR$ and for any $\al > 0$,
  \begin{align*}
    \big(\tilde{a}^\al - \tilde{b}^\al\big)^2
    \;=\;
    \bigg(\al\, \int_b^a |t|^{\al-1} \, \md t\bigg)^{\!\!2}.
  \end{align*}
  Hence, for any $\al > 1/2$, an application of the Cauchy--Schwarz inequality yields
  \begin{align*}
    \bigg( \al \int_{b}^{a} |t|^{\al-1} \md t\bigg)^{\!\!2}
    \;\leq\;
    \al^2\, \int_{b}^{a} \md t\, \int_{b}^{a} |t|^{2\al-2} \md t
    \;=\;
    \frac{\al^2}{2\al-1}\, \big(a - b\big)\,
    \big(\tilde{a}^{2\al-1} - \tilde{b}^{2\al-1}\big).
  \end{align*}

  \textit{(iii)} Let us first assume that $a \geq 0, b \leq 0$ or $a \leq 0, b \geq 0$.  Then, for all $\al, \be \geq0$,
  \begin{align*}
    \big(|a|^{\al} +\, |b|^{\al}\big)\,
    \big|\tilde{a}^{\be} -\, \tilde{b}^{\be}\big|
    &\;=\;
    \big(|a|^{\al} + |b|^{\al}\big)\,
    \big(|a|^{\be} +\, |b|^{\be}\big)
    \\
    &\;=\;
    2\, \big(|a|^{\al+\be} +\, |b|^{\al+\be}\big) \,-\,
    \big(|a|^{\al} -\, |b|^{\al}\big)\big(|a|^{\be} -\, |b|^{\be}\big).    
  \end{align*}
  One sees easily that the last term is positive as long as $\al, \be \geq 0$.  It remains to consider the case when $a,b \geq 0$ or $a,b \leq 0$.  Since the assertion is trivial, if $a = 0$, we assume that $a \ne 0$ and set $z = b/a \geq 0$.  Then the left-hand side of \eqref{eq:A1:chain:lo} reads $\big(1 + z^{\al}\big)\, \big|1 - z^{\be}\big|$.  Provided that $\al \geq 0$, by distinguishing two cases, $z \in [0,1)$ or $z \geq 1$, we obtain
  \begin{align*}
    \big(1 + z^{\al}\big)\, \big|1 - z^{\be}\big|
    \;=\;
    2 \big|1 - z^{\al+\be}\big| \,-\, \big|1 - z^{\al}\big|\,
    \big(1 + z^{\be}\big)
    \;\leq\;
    2 \big|1 - z^{\al+\be}\big|.
  \end{align*}
  This completes the proof.

  \textit{(iv)} The assertion follows immediately from \eqref{eq:A1:chain:ub1} and \eqref{eq:A1:chain:lo}. 
\end{proof}
\subsubsection*{Acknowledgement} We thank Martin Barlow, Marek Biskup, Takashi Kumagai and Stefan Neukamm for useful discussions and valuable comments.  M.S. gratefully acknowledges financial support from the DFG Forschergruppe 718 ''Analysis and Stochastics in Complex Physical Systems''.

\bibliographystyle{plain}
\bibliography{literature}

\end{document}